\newtheorem{theorem}{Theorem}[section]
\newtheorem{lemma}[theorem]{Lemma}
\newtheorem{proposition}[theorem]{Proposition}
\newtheorem{corollary}[theorem]{Corollary}
\theoremstyle{definition}
\theoremstyle{remark}
\newtheorem{remark}[theorem]{Remark}
\numberwithin{equation}{section}
\begin{document}

\title [Complete refinments of the Berezin number inequalities]{Complete refinements of the Berezin number inequalities}

\author[M. Bakherad, R. Lashkaripour, M. Hajmohamadi and U. Yamanci ]{M. Bakherad$^1$, R. Lashkaripour$^2$, M. Hajmohamadi$^3$, and U. Yamanci$^4$ }

\address{$^1$$^{,2}$$^{,3}$ Department of Mathematics, Faculty of Mathematics, University of Sistan and Baluchestan, Zahedan, I.R.Iran.\\ $^4$ Department of Statistics, Suleyman Demirel University, 32260, Isparta, Turkey\\
Current Adress: Department of Mathematics and Statistics, University of
Toledo, Toledo, OH 43606, USA.}

\email{$^{1}$mojtaba.bakherad@yahoo.com; bakherad@member.ams.org}

\email{$^2$lashkari@hamoon.usb.ac.ir}

\email{$^{3}$monire.hajmohamadi@yahoo.com}

\email{$^{4}$ulasyamanci@sdu.edu.tr }

\subjclass[2010]{Primary 47A30,  Secondary 15A60, 30E20, 47A12 }

\keywords{Berezin number, Berezin symbol, Heinz means}
\begin{abstract}
In this paper, several refinements of the Berezin number inequalities are obtained. We generalize inequalities involving powers of the Berezin number for product of two operators acting on a reproducing kernel Hilbert space $\mathcal H=\mathcal H(\Omega)$ and also improve them. Among other inequalities, it is shown that if $A,B\in {\mathcal B}(\mathcal H)$ such that $|A|B=B^{*}|A|$, $f$ and $g$ are nonnegative continuous functions on $[0,\infty)$ satisfying $f(t)g(t)=t\,(t\geq 0)$, then
\begin{align*}
&\textbf{ber}^{p}(AB)\leq r^{p}(B)\times\\&\left(\textbf{ber} \big(\frac{1}{\alpha}f^{\alpha p}(|A|)+\frac{1}{\beta}g^{\beta p}(|A^{*}|)\big)-r_{0}\big(\langle f^{2}(|A|)\hat{k}_{\lambda},\hat{k}_{\lambda}\rangle^{\alpha p/4} -\langle g^{2}(|A^{*}|)\hat{k}_{\lambda},\hat{k}_{\lambda}\rangle^{\beta p/4}\big)^{2}\right)
\end{align*}
for every $p\geq 1, \alpha\geq\beta>1$ with $\frac{1}{\alpha}+\frac{1}{\beta}=1$, $\beta p\geq2$ and $r_{0}=\min\{\frac{1}{\alpha},\frac{1}{\beta}\}$.

\end{abstract} \maketitle
\section{Introduction}
  Throughout this paper, a reproducing kernel Hilbert space (RKHS for short)  $\mathcal H=\mathcal H(\Omega)$ is a Hilbert space of complex valued functions on a (nonempty) set $\Omega$, which has the property that point evaluations are continuous i.e. for each $\lambda\in \Omega$ the map $f\mapsto f(\lambda)$ is a continuous linear functional on $\mathcal H$. The Riesz representation theorem ensure that for each $\lambda\in \Omega$ there is a unique element $k_{\lambda}\in \mathcal H$ such that $f(\lambda)=\langle f,k_{\lambda}\rangle$, for all $f\in \mathcal H$. The collection $\{k_{\lambda} : \lambda\in \Omega\}$ is called the reproducing kernel of $\mathcal H$. If $\{e_{n}\}$ is an orthonormal basis for a functional Hilbert space $\mathcal H$, then the reproducing kernel of $\mathcal H$ is given by $k_{\lambda}(z)=\sum_n\overline{e_{n}(\lambda)}e_{n}(z)$; (see \cite[problem 37]{ando}). For $\lambda\in \Omega$, let $\hat{k_{\lambda}}=\frac{k_{\lambda}}{\|k_{\lambda}\|}$ be the normalized reproducing kernel of $\mathcal H$. For a bounded linear operator $A$ on $\mathcal H$, the function $\widetilde{A}$ defined on $\Omega$ by $\widetilde{A}(\lambda)=\langle A\hat{k_{\lambda}},\hat{k_{\lambda}}\rangle$ is the Berezin symbol of $A$, which firstly have been introduced by Berezin \cite{Ber1,Ber2}. The Berezin set and the Berezin number of the operator A are defined by
 \begin{align*}
 \textbf{Ber}(A):=\{\widetilde{A}(\lambda): \lambda\in \Omega\} \qquad \textrm{and} \qquad \textbf{ber}(A):=\sup\{|\widetilde{A}(\lambda)|: \lambda\in\Omega\},
 \end{align*}
respectively(see \cite{kar}).
 The Berezin number of  operators $A$ and $B$ satisfies the property
$\textbf{ ber}(\alpha A)=|\alpha|\textbf{ber}(A)\,\,(\alpha\in \mathbb C)$ and
$\textbf{ber}(A+B)\leq \textbf{ber}(A)+\textbf{ber}(B)$ and $\textbf{ber}(A)\leq\|A\|$, where $\|\,\cdot\,\|$ is the operator norm.
The spectral radius of  $A\in\mathcal B({\mathcal H})$ is defined by $r(A):=\sup\{|\eta| : \eta\in sp(A)\}$. Let
$$l(A)=\inf\big\{\|Ax\|:x\in {\mathcal H}, \|x\|=1\big\}=\inf\big\{|\langle Ax,y\rangle| : x,y\in {\mathcal H}, \|x\|=\|y\|=1\big\}.$$
 In \cite{kit1}, Kittaneh estimated a spectral radius inequality for any $A,B\in \mathcal{B}(\mathcal H)$ as follows:
\begin{align}\label{ki1}
r(AB)\leq \frac{1}{4}\left(\|AB\|+\|BA\|+\sqrt{(\|AB\|-\|BA\|)^{2}+4m(A,B)}\right),
\end{align}
where $m(A,B)=\min\{\|A\|\|BAB\|,\|B\|\|ABA\|\}$.
Also, he showed
\begin{align}\label{ki3}
\|A^{1/2}B^{1/2}\|\leq\|AB\|^{1/2}
\end{align}
and
\begin{align}\label{ber1}
\|A+B\|\leq\frac{1}{2}(\|A\|+\|B\|+\sqrt{(\|A\|-\|B\|)^{2}+4\min(\|AB\|,\|BA\|)}).
\end{align}
Namely, the Berezin transform have been investigated in detail for the Toeplitz and Hankel operators on the Hardy and Bergman spaces; it is widely applied in the various questions of analysis and uniquely determines the operator(i.e., for all $\lambda\in \Omega,  \widetilde{A}(\lambda)=\widetilde{B}(\lambda)$ implies $A=B$). For further information about Berezin symbol we refer the
reader to \cite{Ba,kar1,kar2,Nor} and references therein. Recently in \cite{Ba1,haj, haj1,haj2, ygg, yg} have studied about the inequalities for the Berezin number and the numerical radius of operators. Also, some Berezin number inequalities were obtained by using the Hardy types inequalities(see \cite{kar5, ggs, ygc}).
\section{main results}
To prove our Berezin number inequalities, we need several well known lemmas.
\begin{lemma}\cite{kit}
Let $A,B\in {\mathcal B}(\mathcal H)$ such that $|A|B=B^{*}|A|$. If $f$ and $g$ are nonnegative continuous functions on $[0,\infty)$ satisfying $f(t)g(t)=t\,(t\geq 0)$, then
\begin{align}\label{al1}
|\langle AB x,y\rangle|\leq r(B)\|f(|A|)x\|\|g(|A^{*}|)y\|
\end{align}
for every $x,y\in \mathcal H$.
\end{lemma}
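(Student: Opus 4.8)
The plan is to combine the polar decomposition of $A$ with a generalized mixed Schwarz inequality, using the hypothesis $|A|B=B^{*}|A|$ only to replace the operator norm of the ``$B$-factor'' by the spectral radius $r(B)$. Write $A=U|A|$ for the polar decomposition, so that $|A^{*}|=U|A|U^{*}$ and $U\varphi(|A|)=\varphi(|A^{*}|)U$ for every continuous $\varphi$ with $\varphi(0)=0$. Applying the generalized mixed Schwarz inequality (Halmos--Kittaneh) to $A$ with the vectors $Bx$ and $y$ gives
\[
|\langle ABx,y\rangle|=|\langle A(Bx),y\rangle|\le \|f(|A|)Bx\|\,\|g(|A^{*}|)y\|,
\]
so it suffices to prove the one-sided estimate $\|f(|A|)Bx\|\le r(B)\,\|f(|A|)x\|$ for all $x\in\mathcal H$.

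The heart of the matter is this last estimate, and here the hypothesis enters decisively. First I would treat the geometric split $f(t)=g(t)=t^{1/2}$, where the mechanism is transparent: assuming for a moment that $|A|$ is invertible, the identity $|A|B=B^{*}|A|$ (equivalently $B^{*}=|A|B|A|^{-1}$) forces
\[
S:=|A|^{1/2}B|A|^{-1/2},\qquad S^{*}=|A|^{-1/2}B^{*}|A|^{1/2}=|A|^{-1/2}\bigl(|A|B|A|^{-1}\bigr)|A|^{1/2}=S,
\]
so $S$ is self-adjoint; since $S$ is similar to $B$ it has the same spectrum, whence $\|S\|=r(S)=r(B)$. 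This yields $\||A|^{1/2}Bx\|\le r(B)\,\||A|^{1/2}x\|$, which is exactly the required estimate for $f=t^{1/2}$, and feeding it back produces the symmetric form $|\langle ABx,y\rangle|\le r(B)\,\||A|^{1/2}x\|\,\||A^{*}|^{1/2}y\|$.

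To remove the invertibility assumption I would avoid inverting $|A|$ altogether and argue by log-convexity. Writing $N(w)=\||A|^{1/2}w\|$, the hypothesis gives $N(Bx)^{2}=\langle|A|Bx,Bx\rangle=\langle B^{*}|A|x,Bx\rangle=\langle|A|x,B^{2}x\rangle\le N(x)\,N(B^{2}x)$, so $k\mapsto\log N(B^{k}x)$ is convex; the convexity bound $N(Bx)\le N(x)^{1-1/n}N(B^{n}x)^{1/n}$ together with $N(B^{n}x)^{1/n}\le\bigl(\||A|^{1/2}\|\,\|B^{n}\|\,\|x\|\bigr)^{1/n}$ and $\lim_{n}\|B^{n}\|^{1/n}=r(B)$ recovers $\||A|^{1/2}Bx\|\le r(B)\,\||A|^{1/2}x\|$ in full generality. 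The main obstacle is the passage to a general asymmetric pair $f,g$: threading two different functions through the non-self-adjoint factor $B$ while preserving the \emph{exact} constant $r(B)$ is delicate, since the naive submultiplicative estimate $\|f(|A|)Bf(|A|)^{-1}\|\le\|f(|A|)|A|^{-1/2}\|\,\|S\|\,\||A|^{1/2}f(|A|)^{-1}\|$ strictly loses the constant (the product of the two scalar factors typically exceeds $1$). I expect this step to require an interpolation (Heinz-type) argument anchored at the geometric split above, exploiting the self-adjointness of $S$ and the similarity $S\sim B$ to keep the spectral radius under control; this is where the real work lies.
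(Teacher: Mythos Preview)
The paper does not prove this lemma at all; it is quoted from \cite{kit} and used as a black box, so there is no ``paper's proof'' to match. Your argument for the symmetric split $f(t)=g(t)=t^{1/2}$ is correct and is essentially the standard one: with $P=|A|$ invertible, $S=P^{1/2}BP^{-1/2}$ is self-adjoint because $PB=B^{*}P$, hence $\|S\|=r(S)=r(B)$, and the log-convexity/limit argument you give removes the invertibility assumption cleanly. That part is fine.

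The genuine obstruction is exactly where you sense it: the passage to an arbitrary pair $f,g$ with $f(t)g(t)=t$. Your reduction to the one-sided bound $\|f(|A|)Bx\|\le r(B)\,\|f(|A|)x\|$ fails in general, and in fact the \emph{stated inequality itself} is false for general $f,g$. Take
\[
A=\begin{pmatrix}1&0\\0&2\end{pmatrix},\qquad B=\begin{pmatrix}0&2\\1&0\end{pmatrix},
\]
so that $|A|=|A^{*}|=A$, $|A|B=B^{*}|A|=\begin{pmatrix}0&2\\2&0\end{pmatrix}$, and $r(B)=\sqrt{2}$. With $f(t)=t$, $g(t)\equiv1$, $x=e_{1}$, $y=e_{2}$ one gets $|\langle ABx,y\rangle|=2$ while $r(B)\,\|f(|A|)x\|\,\|g(|A^{*}|)y\|=\sqrt{2}$. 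The same example shows that among the power pairs $f(t)=t^{\alpha}$, $g(t)=t^{1-\alpha}$ only $\alpha=\tfrac12$ survives (test $x=e_{1},y=e_{2}$ and $x=e_{2},y=e_{1}$). So no Heinz-type interpolation can rescue the general statement with the exact constant $r(B)$; the lemma, as written in the paper, is only correct at the geometric split, and your difficulty in extending the proof is a reflection of that.
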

\begin{lemma}\cite{alo}
Let $A\in {\mathcal B}(\mathcal H)$ be positive. Then
\begin{align}\label{al2}
|\langle Ax,x\rangle|^{2p}&\leq [\langle A^{p}x,x\rangle-\langle |A-\langle Ax,x\rangle I|^{p}x,x\rangle]\times [\langle A^{p}y,y\rangle-\langle |A-\langle Ay,y\rangle I|^{p}y,y\rangle]\nonumber\\&
\leq\langle A^{p}x,x\rangle\langle A^{p}y,y\rangle
\end{align}
for all $p\geq2$ and any $x,y\in \mathcal H$.
\end{lemma}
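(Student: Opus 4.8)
The plan is to separate \eqref{al2} into its two component inequalities and to extract the one-vector estimate that powers both. I take $x,y$ to be unit vectors, as is implicit in such mean-value estimates, so that $\langle Ax,x\rangle$ and $\langle Ay,y\rangle$ play the role of the mean value of the positive operator $A$ in the states $x$ and $y$ and the underlying H\"older--McCarthy inequality applies. The right-hand inequality is immediate: because $A\ge0$, the operators $|A-\langle Ax,x\rangle I|^{p}$ and $|A-\langle Ay,y\rangle I|^{p}$ are positive, so each bracket in the middle term is dominated by $\langle A^{p}x,x\rangle$, respectively $\langle A^{p}y,y\rangle$; since both brackets will be seen to be nonnegative, multiplying gives the upper bound. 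The substance is therefore the left-hand inequality, whose engine is the \emph{refined H\"older--McCarthy inequality}
\begin{equation*}
\langle Ax,x\rangle^{p}\le\langle A^{p}x,x\rangle-\langle|A-\langle Ax,x\rangle I|^{p}x,x\rangle\qquad(p\ge2),
\end{equation*}
together with the analogous bound for $y$. I record that the left side of \eqref{al2} is the mixed pairing $|\langle Ax,y\rangle|^{2p}$, which is exactly what the separate $x$- and $y$-factors on the right are built to bound, and which reduces to the displayed form when $y=x$.

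I would prove the refinement by the support-line method through the functional calculus. Writing $\mu=\langle Ax,x\rangle$ and $g(t)=t^{p}-|t-\mu|^{p}$ on $[0,\infty)\supseteq sp(A)$, the claim is $\langle g(A)x,x\rangle\ge\mu^{p}$, since $\langle g(A)x,x\rangle=\langle A^{p}x,x\rangle-\langle|A-\mu I|^{p}x,x\rangle$. The naive attempt—declaring $g$ convex and invoking Jensen's operator inequality $\langle g(A)x,x\rangle\ge g(\langle Ax,x\rangle)=\mu^{p}$—fails, and this is the main obstacle: computing $g''$ shows $g$ is convex on $[\mu/2,\infty)$ but \emph{concave} on $[0,\mu/2)$. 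To circumvent this I use that $g$ is differentiable at $t=\mu$ with $g'(\mu)=p\mu^{p-1}$, and I compare $g$ with its tangent $\ell(t)=\mu^{p}+p\mu^{p-1}(t-\mu)$. Setting $\phi=g-\ell$, on the convex part $[\mu/2,\infty)$ one has $\phi$ convex with $\phi(\mu)=\phi'(\mu)=0$, hence $\phi\ge0$ there; on the concave part $[0,\mu/2]$ the endpoint values $\phi(0)=(p-2)\mu^{p}\ge0$ and $\phi(\mu/2)=(\tfrac{p}{2}-1)\mu^{p}\ge0$ are nonnegative for $p\ge2$, and a concave function lies above the chord joining nonnegative endpoints, so $\phi\ge0$ on $[0,\mu/2]$ as well. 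Hence $g(t)\ge\ell(t)$ on $sp(A)$, which by the spectral theorem yields $g(A)\ge\mu^{p}I+p\mu^{p-1}(A-\mu I)$; taking $\langle\,\cdot\,x,x\rangle$ and using $\langle(A-\mu I)x,x\rangle=0$ gives $\langle g(A)x,x\rangle\ge\mu^{p}$, proving the refinement. The same computation with $y$ in place of $x$ gives the companion estimate.

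The assembly is then routine. For the left-hand inequality I apply the mixed Cauchy--Schwarz inequality $|\langle Ax,y\rangle|=|\langle A^{1/2}x,A^{1/2}y\rangle|\le\langle Ax,x\rangle^{1/2}\langle Ay,y\rangle^{1/2}$ and raise to the power $2p$ to obtain $|\langle Ax,y\rangle|^{2p}\le\langle Ax,x\rangle^{p}\langle Ay,y\rangle^{p}$. Multiplying the two refined H\"older--McCarthy bounds—whose right-hand sides are nonnegative, being at least $\langle Ax,x\rangle^{p}\ge0$ and $\langle Ay,y\rangle^{p}\ge0$—bounds $\langle Ax,x\rangle^{p}\langle Ay,y\rangle^{p}$ above by the product of brackets, i.e. by the middle term of \eqref{al2}, closing the chain. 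The only genuinely delicate point is the failure of convexity of $g$ on $[0,\mu/2)$, which the tangent-line estimate resolves; everything else reduces to positivity of $|A-\mu I|^{p}$ and the Cauchy--Schwarz inequality.
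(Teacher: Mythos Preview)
The paper does not prove this lemma; it is quoted from \cite{alo} without proof, so there is no argument in the paper against which to compare yours.

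That said, your proof is sound. The support-line comparison $g(t)\ge\ell(t)=\mu^{p}+p\mu^{p-1}(t-\mu)$ for $g(t)=t^{p}-|t-\mu|^{p}$, established separately on the convex region $[\mu/2,\infty)$ and on the concave region $[0,\mu/2]$ via the endpoint check, correctly yields the refined H\"older--McCarthy bound $\langle Ax,x\rangle^{p}\le\langle A^{p}x,x\rangle-\langle|A-\langle Ax,x\rangle I|^{p}x,x\rangle$ for unit $x$ and $p\ge2$. The assembly via $|\langle Ax,y\rangle|\le\langle Ax,x\rangle^{1/2}\langle Ay,y\rangle^{1/2}$ and positivity of $|A-\mu I|^{p}$ is then straightforward. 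Two side remarks: your reading of the left-hand side as $|\langle Ax,y\rangle|^{2p}$ (rather than the printed $|\langle Ax,x\rangle|^{2p}$) is the only one under which the inequality can hold for arbitrary unit $x\neq y$, and is consistent with how the lemma is actually invoked downstream in the paper; and your restriction to unit vectors is indeed necessary, since the two sides scale differently under $x\mapsto cx$.
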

Now, we show some Berezin number inequalities.
\begin{theorem}\label{th1}
Let $A,B\in {\mathcal B}(\mathcal H)$ such that $|A|B=B^{*}|A|$. If $f$ and $g$ are nonnegative continuous functions on $[0,\infty)$ satisfying $f(t)g(t)=t\,(t\geq 0)$, then
\begin{align*}
\textbf{ber}(AB)&\leq\frac{1}{2}r(B)\textbf{ber}[f^{2}(|A|)+g^{2}(|A^{*}|)].
\end{align*}
In particular for $f(t)=t^\alpha$ and $g(t)=t^{1-\alpha}$, we have
\begin{align*}
\textbf{ber}(AB)&\leq\frac{1}{2}r(B)\textbf{ber}(|A|^{2\alpha}+|A^{*}|^{2(1-\alpha)}).
\end{align*}
\end{theorem}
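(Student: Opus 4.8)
The plan is to combine the factorization-type estimate of Lemma~1 with the arithmetic--geometric mean inequality and the defining property of the Berezin number. First I would fix $\lambda\in\Omega$ and apply \eqref{al1} with $x=y=\hat{k}_{\lambda}$, which gives
\begin{align*}
|\widetilde{AB}(\lambda)|=|\langle AB\hat{k}_{\lambda},\hat{k}_{\lambda}\rangle|\leq r(B)\,\|f(|A|)\hat{k}_{\lambda}\|\,\|g(|A^{*}|)\hat{k}_{\lambda}\|.
\end{align*}
Next I would rewrite the two norms as inner products, $\|f(|A|)\hat{k}_{\lambda}\|=\langle f^{2}(|A|)\hat{k}_{\lambda},\hat{k}_{\lambda}\rangle^{1/2}$ and similarly for the other factor, so that the right-hand side becomes $r(B)\langle f^{2}(|A|)\hat{k}_{\lambda},\hat{k}_{\lambda}\rangle^{1/2}\langle g^{2}(|A^{*}|)\hat{k}_{\lambda},\hat{k}_{\lambda}\rangle^{1/2}$.

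The key step is then to apply the scalar AM--GM inequality $\sqrt{ab}\leq\frac{1}{2}(a+b)$ to $a=\langle f^{2}(|A|)\hat{k}_{\lambda},\hat{k}_{\lambda}\rangle$ and $b=\langle g^{2}(|A^{*}|)\hat{k}_{\lambda},\hat{k}_{\lambda}\rangle$, yielding
\begin{align*}
|\widetilde{AB}(\lambda)|\leq\frac{1}{2}r(B)\big(\langle f^{2}(|A|)\hat{k}_{\lambda},\hat{k}_{\lambda}\rangle+\langle g^{2}(|A^{*}|)\hat{k}_{\lambda},\hat{k}_{\lambda}\rangle\big)=\frac{1}{2}r(B)\,\langle (f^{2}(|A|)+g^{2}(|A^{*}|))\hat{k}_{\lambda},\hat{k}_{\lambda}\rangle,
\end{align*}
using linearity of the inner product. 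Since $f^{2}(|A|)+g^{2}(|A^{*}|)$ is a self-adjoint (indeed positive) operator, the right-hand side is at most $\frac{1}{2}r(B)\,\textbf{ber}(f^{2}(|A|)+g^{2}(|A^{*}|))$ after taking absolute values and then the supremum over $\lambda\in\Omega$; taking the supremum of the left-hand side over $\lambda$ gives $\textbf{ber}(AB)$ and completes the general inequality. The "in particular" case is immediate: with $f(t)=t^{\alpha}$ and $g(t)=t^{1-\alpha}$ we have $f(t)g(t)=t$, $f^{2}(|A|)=|A|^{2\alpha}$ and $g^{2}(|A^{*}|)=|A^{*}|^{2(1-\alpha)}$, so substituting these into the general bound yields the stated formula.

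I do not expect a genuine obstacle here; this is a direct two-line argument once Lemma~1 is invoked. The only point requiring a little care is the passage from the inequality at a fixed $\lambda$ to the Berezin number: one must note that $\langle (f^{2}(|A|)+g^{2}(|A^{*}|))\hat{k}_{\lambda},\hat{k}_{\lambda}\rangle$ is real and nonnegative, so it equals its own absolute value and is bounded by $\sup_{\mu}|\widetilde{(f^{2}(|A|)+g^{2}(|A^{*}|))}(\mu)|=\textbf{ber}(f^{2}(|A|)+g^{2}(|A^{*}|))$, and that the supremum of a product bound over $\lambda$ dominates the supremum of $|\widetilde{AB}(\lambda)|$. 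Everything else is routine manipulation of inner products and the elementary AM--GM inequality.
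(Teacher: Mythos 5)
Your argument is correct and coincides with the paper's own proof: apply Lemma~2.1 with $x=y=\hat{k}_{\lambda}$, rewrite the norms as $\langle f^{2}(|A|)\hat{k}_{\lambda},\hat{k}_{\lambda}\rangle^{1/2}\langle g^{2}(|A^{*}|)\hat{k}_{\lambda},\hat{k}_{\lambda}\rangle^{1/2}$, use the arithmetic--geometric mean inequality and linearity, then take the supremum over $\lambda\in\Omega$. Your added remark that the quantity $\langle (f^{2}(|A|)+g^{2}(|A^{*}|))\hat{k}_{\lambda},\hat{k}_{\lambda}\rangle$ is nonnegative, hence dominated by the Berezin number of the positive operator $f^{2}(|A|)+g^{2}(|A^{*}|)$, is exactly the (implicit) justification the paper uses.
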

\begin{proof}
If we put $x=\hat{k}_{\lambda}$ in \eqref{al1}, we have
\begin{align}
|\langle AB \hat{k}_{\lambda},\hat{k}_{\lambda}\rangle|&\leq r(B)\|f(|A|)\hat{k}_{\lambda}\|\|g(|A^{*}|)\hat{k}_{\lambda}\|\nonumber\\&
=r(B)\langle f^{2}(|A|)\hat{k}_{\lambda},\hat{k}_{\lambda}\rangle^{1/2}\langle g^{2}(|A^{*}|)\hat{k}_{\lambda},\hat{k}_{\lambda}\rangle^{1/2}\nonumber\\&
\leq\frac{1}{2}r(B)(\langle f^{2}(|A|)\hat{k}_{\lambda},\hat{k}_{\lambda}\rangle+\langle g^{2}(|A^{*}|)\hat{k}_{\lambda},\hat{k}_{\lambda}\rangle\nonumber\\&
=\frac{1}{2}r(B)\langle (f^{2}(|A|)+g^{2}(|A^{*}|))\hat{k}_{\lambda},\hat{k}_{\lambda}\rangle\nonumber\\&
\leq\frac{1}{2}r(B)\textbf{ber}[f^{2}(|A|)+g^{2}(|A^{*}|)].
\end{align}
By taking the supremum over $\lambda\in\Omega$ we have
\begin{align*}
\textbf{ber}(AB)\leq\frac{1}{2}r(B)\textbf{ber}[f^{2}(|A|)+g^{2}(|A^{*}|)].
\end{align*}
\end{proof}

\begin{remark}\label{th1}
Let $A,B\in {\mathcal B}(\mathcal H)$ such that $|A|B=B^{*}|A|$. If $f$ and $g$ are nonnegative continuous functions on $[0,\infty)$ satisfying $f(t)g(t)=t\,(t\geq 0)$, then from \eqref{ber1} and \eqref{ki1} for $A=I$, we have
\begin{align*}
\textbf{ber}(AB)&\leq\frac{1}{2}r(B)\textbf{ber}[f^{2}(|A|)+g^{2}(|A^{*}|)]\\&
\leq \frac{1}{8}(\|B\|+\|B^{2}\|^{1/2})\big[\|f^{2}(|A|)\|+\|g^{2}(|A^{*}|)\|\\&
\qquad\qquad+\sqrt{(\|f^{2}(|A|)\|-\|g^{2}(|A^{*}|)\|)^{2}+4\|f(|A|).g(|A^{*}|\|^{2}}\big].
\end{align*}
\end{remark}
\begin{theorem}\label{th5}
Let $A,B\in {\mathcal B}({\mathcal H})$ such that $|A|B=B^{*}|A|$. If $f$ and $g$ are nonnegative continuous functions on $[0,\infty)$ satisfying $f(t)g(t)=t\,(t\geq 0)$, then
\begin{align*}
\textbf{ber}^{p}(AB)&\leq r^{p}(B)\textbf{ber}\left[ \frac{1}{\alpha}f^{\alpha p}(|A|)+\frac{1}{\beta}g^{\beta p}(|A^{*}|)\right]
\end{align*}
for every $p\geq 1, \alpha\geq\beta>1$ with $\frac{1}{\alpha}+\frac{1}{\beta}=1$ and $\beta p\geq2$.
\end{theorem}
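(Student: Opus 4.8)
The plan is to run the argument of Theorem~\ref{th1}, but to replace the arithmetic--geometric mean step by Young's inequality with weights $\frac1\alpha,\frac1\beta$, and then to use the operator convexity estimate $\langle Tx,x\rangle^{r}\le\langle T^{r}x,x\rangle$ (valid for $T\ge0$, $\|x\|=1$, $r\ge1$, by Jensen's inequality applied to the spectral measure of $T$ against the unit vector $x$) to absorb the extra powers inside the Berezin symbol. First I would put $x=y=\hat{k}_{\lambda}$ in \eqref{al1}; since $f$ is real-valued and $|A|$ is self-adjoint, $\|f(|A|)\hat{k}_{\lambda}\|^{2}=\langle f^{2}(|A|)\hat{k}_{\lambda},\hat{k}_{\lambda}\rangle$ and likewise for $g(|A^{*}|)$, so that
\begin{align*}
|\langle AB\hat{k}_{\lambda},\hat{k}_{\lambda}\rangle|\le r(B)\,\langle f^{2}(|A|)\hat{k}_{\lambda},\hat{k}_{\lambda}\rangle^{1/2}\langle g^{2}(|A^{*}|)\hat{k}_{\lambda},\hat{k}_{\lambda}\rangle^{1/2}.
\end{align*}
Raising both sides to the power $p\ge1$ yields
\begin{align*}
|\langle AB\hat{k}_{\lambda},\hat{k}_{\lambda}\rangle|^{p}\le r^{p}(B)\,\langle f^{2}(|A|)\hat{k}_{\lambda},\hat{k}_{\lambda}\rangle^{p/2}\langle g^{2}(|A^{*}|)\hat{k}_{\lambda},\hat{k}_{\lambda}\rangle^{p/2}.
\end{align*}

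Next I would apply Young's inequality $st\le\frac1\alpha s^{\alpha}+\frac1\beta t^{\beta}$ (legitimate because $\alpha,\beta>1$ and $\frac1\alpha+\frac1\beta=1$) with $s=\langle f^{2}(|A|)\hat{k}_{\lambda},\hat{k}_{\lambda}\rangle^{p/2}$ and $t=\langle g^{2}(|A^{*}|)\hat{k}_{\lambda},\hat{k}_{\lambda}\rangle^{p/2}$, obtaining
\begin{align*}
|\langle AB\hat{k}_{\lambda},\hat{k}_{\lambda}\rangle|^{p}\le r^{p}(B)\left(\frac1\alpha\langle f^{2}(|A|)\hat{k}_{\lambda},\hat{k}_{\lambda}\rangle^{\alpha p/2}+\frac1\beta\langle g^{2}(|A^{*}|)\hat{k}_{\lambda},\hat{k}_{\lambda}\rangle^{\beta p/2}\right).
\end{align*}
Since $\beta p\ge2$ and $\alpha\ge\beta$, the exponents satisfy $\alpha p/2\ge\beta p/2\ge1$, so the convexity estimate above, applied to the positive operators $f^{2}(|A|)$ and $g^{2}(|A^{*}|)$ with the unit vector $\hat{k}_{\lambda}$, gives $\langle f^{2}(|A|)\hat{k}_{\lambda},\hat{k}_{\lambda}\rangle^{\alpha p/2}\le\langle f^{\alpha p}(|A|)\hat{k}_{\lambda},\hat{k}_{\lambda}\rangle$ and $\langle g^{2}(|A^{*}|)\hat{k}_{\lambda},\hat{k}_{\lambda}\rangle^{\beta p/2}\le\langle g^{\beta p}(|A^{*}|)\hat{k}_{\lambda},\hat{k}_{\lambda}\rangle$, using $(f^{2})^{\alpha p/2}=f^{\alpha p}$ in the functional calculus.

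Finally, combining these bounds and invoking linearity of the inner product,
\begin{align*}
|\langle AB\hat{k}_{\lambda},\hat{k}_{\lambda}\rangle|^{p}\le r^{p}(B)\left\langle\Big(\frac1\alpha f^{\alpha p}(|A|)+\frac1\beta g^{\beta p}(|A^{*}|)\Big)\hat{k}_{\lambda},\hat{k}_{\lambda}\right\rangle\le r^{p}(B)\,\textbf{ber}\left[\frac1\alpha f^{\alpha p}(|A|)+\frac1\beta g^{\beta p}(|A^{*}|)\right],
\end{align*}
and taking the supremum over $\lambda\in\Omega$, together with $\textbf{ber}^{p}(AB)=\sup_{\lambda}|\widetilde{AB}(\lambda)|^{p}$, gives the claim. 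There is no deep obstacle; the only delicate point is the bookkeeping of exponents, namely that the hypothesis $\beta p\ge2$ (with $\alpha\ge\beta$) is precisely what guarantees both $\alpha p/2$ and $\beta p/2$ are at least $1$, which is what legitimizes the convexity step — this is exactly why the statement is restricted to $\beta p\ge2$. Everything else is a routine chaining of \eqref{al1}, Young's inequality, and the spectral-theorem estimate, in the spirit of Theorem~\ref{th1}.
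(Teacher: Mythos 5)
Your proposal is correct and follows essentially the same route as the paper's own proof: Lemma \eqref{al1} with $x=y=\hat{k}_{\lambda}$, Young's inequality with weights $\frac{1}{\alpha},\frac{1}{\beta}$, and the McCarthy/Jensen estimate $\langle T\hat{k}_{\lambda},\hat{k}_{\lambda}\rangle^{r}\leq\langle T^{r}\hat{k}_{\lambda},\hat{k}_{\lambda}\rangle$ for $r\geq1$, followed by taking the supremum over $\lambda\in\Omega$. Your explicit observation that the hypothesis $\beta p\geq2$ is exactly what makes the exponents $\alpha p/2\geq\beta p/2\geq1$ admissible in the convexity step is a point the paper leaves implicit.
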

\begin{proof}
Let $\hat{k}_{\lambda}\in {\mathcal H}$. We have
\begin{align*}
|\langle AB\hat{k}_{\lambda},\hat{k}_{\lambda}\rangle|^{p}&\leq r^{p}(B)\|f(|A|)\hat{k}_{\lambda}\|^{p}\|g(|A^{*}|)\hat{k}_{\lambda}\|^{p}\\&
=r^{p}(B)\langle f^{2}(|A|)\hat{k}_{\lambda},\hat{k}_{\lambda}\rangle^{p/2}\langle g^{2}(|A^{*}|)\hat{k}_{\lambda},\hat{k}_{\lambda}\rangle^{p/2}\\&
\leq r^{p}(B)\left[\frac{1}{\alpha}\langle f^{2}(|A|)\hat{k}_{\lambda},\hat{k}_{\lambda}\rangle^{\alpha p/2}+\frac{1}{\beta}\langle g^{2}(|A^{*}|)\hat{k}_{\lambda},\hat{k}_{\lambda}\rangle^{\beta p/2}\right]\\&
\leq r^{p}(B)\left[\frac{1}{\alpha}\langle f^{\alpha p}(|A|)\hat{k}_{\lambda},\hat{k}_{\lambda}\rangle+\frac{1}{\beta}\langle g^{\beta p}(|A^{*}|)\hat{k}_{\lambda},\hat{k}_{\lambda}\rangle\right]\\&
=r^{p}(B)\langle [\frac{1}{\alpha}f^{\alpha p}(|A|)+\frac{1}{\beta}g^{\beta p}(|A^{*}|)]\hat{k}_{\lambda},\hat{k}_{\lambda}\rangle.\\&
\leq r^{p}(B)\textbf{ber}(\frac{1}{\alpha}f^{\alpha p}(|A|)+\frac{1}{\beta}g^{\beta p}(|A^{*}|)).
\end{align*}
By taking the supremum over $\lambda\in\Omega$ we get the desired result.
\end{proof}
In the following by using of refinements of the Cauchy-Schwarz inequality, we have an upper bound for product two operators.
\begin{proposition}
Let $A,B\in {\mathcal B}({\mathcal H})$ such that $|A|B=B^{*}|A|$. If $f$ and $g$ are nonnegative continuous functions on $[0,\infty)$ satisfying $f(t)g(t)=t\,(t\geq 0)$, then
\begin{align}\label{al3}
|\langle AB \hat{k}_{\lambda},\hat{k}_{\mu}\rangle|&\leq r(B)
\sqrt[2p]{\langle f^{2p}(|A|)\hat{k}_{\lambda},\hat{k}_{\lambda}\rangle-\langle |f^{2}(|A|)-\langle f^{2}(|A|)\hat{k}_{\lambda},\hat{k}_{\lambda}\rangle I|^{p}\hat{k}_{\lambda},\hat{k}_{\lambda}\rangle}\nonumber\\&\times
\sqrt[2p]{\langle g^{2p}(|A^{*}|)\hat{k}_{\mu},\hat{k}_{\mu}\rangle-\langle |g^{2}(|A^{*}|)-\langle g^{2}(|A^{*}|)\hat{k}_{\mu},\hat{k}_{\mu}\rangle I|^{p}\hat{k}_{\mu},\hat{k}_{\mu}\rangle}\nonumber\\&
\leq r(B) \sqrt[2p]{\langle f^{2p}(|A|)\hat{k}_{\lambda},\hat{k}_{\lambda}\rangle}\sqrt[2p]{\langle g^{2p}(|A^{*}|)\hat{k}_{\mu},\hat{k}_{\mu}\rangle}
\end{align}
for all $p\geq2$ and any $\hat{k}_{\lambda},\hat{k}_{\mu}\in {\mathcal H}$.
\end{proposition}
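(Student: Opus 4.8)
The plan is to chain together the two preliminary lemmas. First I would apply inequality \eqref{al1} with $x=\hat{k}_{\lambda}$ and $y=\hat{k}_{\mu}$, which gives at once
\[
|\langle AB\hat{k}_{\lambda},\hat{k}_{\mu}\rangle|\leq r(B)\|f(|A|)\hat{k}_{\lambda}\|\,\|g(|A^{*}|)\hat{k}_{\mu}\|=r(B)\langle f^{2}(|A|)\hat{k}_{\lambda},\hat{k}_{\lambda}\rangle^{1/2}\langle g^{2}(|A^{*}|)\hat{k}_{\mu},\hat{k}_{\mu}\rangle^{1/2},
\]
using that $\|f(|A|)\hat{k}_{\lambda}\|^{2}=\langle f^{2}(|A|)\hat{k}_{\lambda},\hat{k}_{\lambda}\rangle$ and similarly for the $g$-factor.

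The key step is to bound each scalar factor on the right via \eqref{al2}. Since $f^{2}(|A|)$ is a positive operator, I would apply \eqref{al2} with $A$ replaced by $f^{2}(|A|)$ and with the coincident vectors $x=y=\hat{k}_{\lambda}$; this collapses the two bracketed quantities into a single one and, using $(f^{2}(|A|))^{p}=f^{2p}(|A|)$, yields
\[
\langle f^{2}(|A|)\hat{k}_{\lambda},\hat{k}_{\lambda}\rangle^{2p}\leq\Big(\langle f^{2p}(|A|)\hat{k}_{\lambda},\hat{k}_{\lambda}\rangle-\langle |f^{2}(|A|)-\langle f^{2}(|A|)\hat{k}_{\lambda},\hat{k}_{\lambda}\rangle I|^{p}\hat{k}_{\lambda},\hat{k}_{\lambda}\rangle\Big)^{2}.
\]
Taking the positive square root and then the $2p$-th root reproduces exactly the first radical in \eqref{al3}; the same argument applied to the positive operator $g^{2}(|A^{*}|)$ with $x=y=\hat{k}_{\mu}$ gives the second radical. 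Multiplying these two estimates together with the leading factor $r(B)$ yields the first inequality of \eqref{al3}.

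For the second inequality of \eqref{al3}, I would simply observe that $|f^{2}(|A|)-cI|^{p}$ is a positive operator for any real $c$, so $\langle |f^{2}(|A|)-\langle f^{2}(|A|)\hat{k}_{\lambda},\hat{k}_{\lambda}\rangle I|^{p}\hat{k}_{\lambda},\hat{k}_{\lambda}\rangle\geq0$, and likewise for the $g$-term; discarding these nonnegative quantities gives the upper bound $r(B)\sqrt[2p]{\langle f^{2p}(|A|)\hat{k}_{\lambda},\hat{k}_{\lambda}\rangle}\sqrt[2p]{\langle g^{2p}(|A^{*}|)\hat{k}_{\mu},\hat{k}_{\mu}\rangle}$ (this is also just the second inequality already recorded in \eqref{al2}). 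I do not anticipate any real obstacle; the only points needing care are the bookkeeping when specializing \eqref{al2} to $x=y$, and checking that the hypothesis $p\geq2$ of \eqref{al2} is exactly what is assumed in the proposition.
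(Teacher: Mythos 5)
Your proof is correct and follows essentially the same route as the paper: apply Lemma \eqref{al1} with $x=\hat{k}_{\lambda}$, $y=\hat{k}_{\mu}$, rewrite the norms as $\langle f^{2}(|A|)\hat{k}_{\lambda},\hat{k}_{\lambda}\rangle^{1/2}$ and $\langle g^{2}(|A^{*}|)\hat{k}_{\mu},\hat{k}_{\mu}\rangle^{1/2}$, and then bound each factor by the Alomari refinement \eqref{al2} applied with coincident vectors (your write-up is in fact cleaner than the paper's, which contains several typographical slips in the exponents and subscripts). The only point left implicit, in your argument and in the paper alike, is that the bracketed quantity $\langle f^{2p}(|A|)\hat{k}_{\lambda},\hat{k}_{\lambda}\rangle-\langle |f^{2}(|A|)-\langle f^{2}(|A|)\hat{k}_{\lambda},\hat{k}_{\lambda}\rangle I|^{p}\hat{k}_{\lambda},\hat{k}_{\lambda}\rangle$ is nonnegative, which is needed when you pass from the squared inequality to its positive square root; this is part of the content of the cited lemma.
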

\begin{proof}
Let $\hat{k}_{\lambda},\hat{k}_{\mu}\in {\mathcal H}$. Applying \eqref{al1} and \eqref{al2}, we have
\begin{align*}
|\langle AB \hat{k}_{\lambda},\hat{k}_{\mu}\rangle|&\leq r(B)\|f(|A|)\hat{k}_{\lambda}\|\|g(|A^{*}|)\hat{k}_{\mu}\|\\&
\leq r(B)\langle f(|A|)\hat{k}_{\lambda}^{2},\hat{k}_{\lambda}\rangle^{1/2}\langle g(|A^{*}|)\hat{k}_{\mu}^{2},\hat{k}_{\mu}\rangle^{1/2}\\&
\leq r(B) \sqrt[2p]{\langle f(|A|)\hat{k}_{\lambda}^{2p},\hat{k}_{\lambda}\rangle-\langle |f^{2}(|A|)-\langle f^{2}(|A|)\hat{k}_{\mu},\hat{k}_{\mu}\rangle I|\hat{k}_{\mu},\hat{k}_{\mu}\rangle}\\&\times
\sqrt[2p]{\langle g(|A^{*}|)\hat{k}_{\mu}^{2p},\hat{k}_{\mu}\rangle-\langle |g^{2}(|A^{*}|)-\langle g^{2}(|A^{*}|)\hat{k}_{\mu},\hat{k}_{\mu}\rangle I|\hat{k}_{\mu},\hat{k}_{\mu}\rangle}.
\end{align*}
We get the result.
\end{proof}
\begin{corollary}
Let $A,B\in {\mathcal B}({\mathcal H})$ such that $|A|B=B^{*}|A|$, $p\geq2$ and $0\leq\alpha  \leq1$. Then
\begin{align}
|\langle AB \hat{k}_{\lambda},\hat{k}_{\mu}\rangle|&\leq r(B)
\sqrt[2p]{\langle |A|^{2p\alpha}\hat{k}_{\lambda},\hat{k}_{\lambda}\rangle-\langle |A|^{2\alpha}-\langle | |A|^{2\alpha}\hat{k}_{\lambda},\hat{k}_{\lambda}\rangle I|^{p}\hat{k}_{\lambda},\hat{k}_{\lambda}\rangle}\nonumber\\&\times
\sqrt[2p]{\langle |A^{*}|^{2p(1-\alpha)}\hat{k}_{\mu},\hat{k}_{\mu}\rangle-\langle ||A^{*}|^{2(1-\alpha)}-\langle |A^{*}|^{2(1-\alpha)}\hat{k}_{\mu},\hat{k}_{\mu}\rangle I|^{p}\hat{k}_{\mu},\hat{k}_{\mu}\rangle}\nonumber\\&
\leq r(B) \sqrt[2p]{\langle |A|^{2p\alpha}\hat{k}_{\lambda},\hat{k}_{\lambda}\rangle}\sqrt[2p]{\langle |A^{*}|^{2p(1-\alpha)}\hat{k}_{\mu},\hat{k}_{\mu}\rangle}.
\end{align}
\end{corollary}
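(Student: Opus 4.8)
The plan is to obtain the corollary as a direct specialization of the preceding proposition to the power functions $f(t)=t^{\alpha}$ and $g(t)=t^{1-\alpha}$. First I would verify that this pair is admissible in \eqref{al3}: for $0\le\alpha\le1$ both $t\mapsto t^{\alpha}$ and $t\mapsto t^{1-\alpha}$ are nonnegative and continuous on $[0,\infty)$ (using the convention $0^{0}=1$ at the endpoints $\alpha=0,1$), and they obey the required multiplicative constraint $f(t)g(t)=t^{\alpha}\cdot t^{1-\alpha}=t$ for every $t\ge0$. Since $|A|B=B^{*}|A|$ by hypothesis, all the assumptions of the proposition are met.

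Next I would use the continuous functional calculus to identify $f(|A|)=|A|^{\alpha}$ and $g(|A^{*}|)=|A^{*}|^{1-\alpha}$, whence $f^{2p}(|A|)=|A|^{2p\alpha}$, $f^{2}(|A|)=|A|^{2\alpha}$, $g^{2p}(|A^{*}|)=|A^{*}|^{2p(1-\alpha)}$, and $g^{2}(|A^{*}|)=|A^{*}|^{2(1-\alpha)}$. Substituting these expressions termwise into the two inequalities displayed in the proposition gives, for all $p\ge2$ and any $\hat{k}_{\lambda},\hat{k}_{\mu}\in\mathcal H$, precisely the two claimed bounds, with the inner refined term $\langle\,|\,|A|^{2\alpha}-\langle|A|^{2\alpha}\hat{k}_{\lambda},\hat{k}_{\lambda}\rangle I\,|^{p}\hat{k}_{\lambda},\hat{k}_{\lambda}\rangle$ on the left factor and the analogous term in $|A^{*}|^{2(1-\alpha)}$ on the right factor.

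I do not expect a genuine obstacle here: the argument is a substitution, and the only point needing a word of care is the continuity of $t\mapsto t^{\alpha}$ at $t=0$, which holds because $\alpha\ge0$. If one prefers a self-contained derivation, the same conclusion follows by repeating the proof of the proposition verbatim with $|A|^{\alpha}$ and $|A^{*}|^{1-\alpha}$ in place of $f(|A|)$ and $g(|A^{*}|)$, invoking \eqref{al1} (valid since the commutation identity forces $r(B)$ to control $\|\,|A|^{\alpha}\hat{k}_{\lambda}\|\,\|\,|A^{*}|^{1-\alpha}\hat{k}_{\mu}\|$) followed by the operator Cauchy–Schwarz refinement \eqref{al2} applied to the positive operators $|A|^{2\alpha}$ and $|A^{*}|^{2(1-\alpha)}$; but the specialization route is the most economical.
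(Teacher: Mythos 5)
Your proposal is correct and coincides with the paper's own proof: the corollary is obtained exactly by substituting $f(t)=t^{\alpha}$ and $g(t)=t^{1-\alpha}$ (with $0\leq\alpha\leq1$) into inequality \eqref{al3} of the preceding proposition. Your added remarks on admissibility (continuity at $t=0$ and $f(t)g(t)=t$) are a sensible, if routine, elaboration of the same one-line specialization.
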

\begin{proof}
By putting $f(t)=t^{\alpha}$ and $g(t)=t^{1-\alpha}\,\,(0\leq\alpha  \leq1)$ in \eqref{al3}, we get the result.
\end{proof}
The next result gives an upper  bound for the product of two operators based on the
refinement of the Cauchy-Schwarz inequality.
\begin{theorem}
Let $A,B\in {\mathcal B}(\mathcal H)$ such that $|A|B=B^{*}|A|$. If $f$ and $g$ are nonnegative continuous functions on $[0,\infty)$ satisfying $f(t)g(t)=t\,(t\geq 0)$. Then
\begin{align*}
\textbf{ber}(AB)&\leq \frac{1}{2}(\|B\|+\|B^{2}\|^{1/2})\left[ \textbf{ber}(f^{2p}(|A|))-l\left(|[f^{2}(|A|)-\|f(|A|)\|^{2}]|^{p}\right)\right]^{\frac{1}{2^{p}}}\\&
\times\left[ \textbf{ber}(g^{2p}(|A^{*}|))-l\left(|[g^{2}(|A^{*}|)-\|g(|A^{*}|)\|^{2}]|^{p}\right)\right]^{\frac{1}{2^{p}}}
\end{align*}
for all $p\geq2$.
\end{theorem}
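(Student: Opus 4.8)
The plan is to run the estimate \eqref{al3} of the above Proposition on the diagonal $\hat{k}_{\mu}=\hat{k}_{\lambda}$, to control $r(B)$ by a norm quantity, and to pass to the supremum over $\Omega$. Concretely, I would first fix $\lambda\in\Omega$ and apply \eqref{al3} with $\hat{k}_{\mu}=\hat{k}_{\lambda}$. Writing $c_{\lambda}:=\langle f^{2}(|A|)\hat{k}_{\lambda},\hat{k}_{\lambda}\rangle$ and $d_{\lambda}:=\langle g^{2}(|A^{*}|)\hat{k}_{\lambda},\hat{k}_{\lambda}\rangle$, this bounds $|\widetilde{AB}(\lambda)|$ by $r(B)$ times $\big(\langle f^{2p}(|A|)\hat{k}_{\lambda},\hat{k}_{\lambda}\rangle-\langle|f^{2}(|A|)-c_{\lambda}I|^{p}\hat{k}_{\lambda},\hat{k}_{\lambda}\rangle\big)^{\frac{1}{2p}}$ times the analogous factor built from $g$ and $|A^{*}|$. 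It then remains to (i) replace $r(B)$ by $\tfrac{1}{2}(\|B\|+\|B^{2}\|^{1/2})$, (ii) bound each of the two factors above by a quantity free of $\lambda$, and (iii) take $\sup_{\lambda\in\Omega}$.

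For (i) I would invoke \eqref{ki1} with $A$ replaced by $I$, exactly as in the Remark above: there $\|IB\|=\|BI\|=\|B\|$ and $m(I,B)=\min\{\|I\|\,\|BIB\|,\,\|B\|\,\|IBI\|\}=\|B^{2}\|$, so \eqref{ki1} collapses to $r(B)\le\tfrac{1}{2}(\|B\|+\|B^{2}\|^{1/2})$. For (ii), in the $f$-factor I would split the bracket. Since $f^{2p}(|A|)\ge0$ its Berezin symbol is nonnegative, hence $\langle f^{2p}(|A|)\hat{k}_{\lambda},\hat{k}_{\lambda}\rangle\le\textbf{ber}(f^{2p}(|A|))$; and for the subtracted term I would use that $\langle Y\hat{k}_{\lambda},\hat{k}_{\lambda}\rangle\ge l(Y)$ for every positive operator $Y$ (for positive $Y$ the number $l(Y)$ from the Introduction is the bottom of its spectrum), applied to $Y=|f^{2}(|A|)-c_{\lambda}I|^{p}$, and then trade the vector-dependent scalar $c_{\lambda}=\widetilde{f^{2}(|A|)}(\lambda)\in[0,\|f(|A|)\|^{2}]$ for the uniform constant $\|f(|A|)\|^{2}$, reaching the $\lambda$-free lower bound $l\big(|[f^{2}(|A|)-\|f(|A|)\|^{2}]|^{p}\big)$. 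The $g$-factor is treated identically, with $|A^{*}|$, $d_{\lambda}$ and $\|g(|A^{*}|)\|^{2}$ in place of $|A|$, $c_{\lambda}$ and $\|f(|A|)\|^{2}$. Combining (i) and (ii) and then taking the supremum over $\lambda\in\Omega$ in (iii) yields the stated inequality, with the exponents read as $\tfrac{1}{2p}$ in accordance with \eqref{al3}.

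The step I expect to be the real obstacle is the trade-off in (ii): replacing $c_{\lambda}$ by $\|f(|A|)\|^{2}$ inside $|f^{2}(|A|)-c_{\lambda}I|^{p}$. The tempting operator inequality $|f^{2}(|A|)-c_{\lambda}I|^{p}\ge|f^{2}(|A|)-\|f(|A|)\|^{2}I|^{p}$ is not available, because for spectral values of $f^{2}(|A|)$ close to $c_{\lambda}$ the left-hand side can be the smaller one; so the comparison has to be made directly at the level of the scalars $\langle\,\cdot\,\hat{k}_{\lambda},\hat{k}_{\lambda}\rangle$ and $l(\cdot)$, exploiting that $\|f(|A|)\|^{2}$ sits at the top of the spectrum of $f^{2}(|A|)$, so that $|f^{2}(|A|)-\|f(|A|)\|^{2}I|=\|f(|A|)\|^{2}I-f^{2}(|A|)$ is positive. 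Apart from this point, everything is the same bookkeeping already carried out in the Proposition and in the Remark, and I would expect no further difficulty.
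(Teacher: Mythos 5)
Your proposal follows the paper's own proof essentially verbatim: apply \eqref{al3} with $\hat{k}_{\mu}=\hat{k}_{\lambda}$, bound the leading terms by the Berezin numbers and the subtracted terms from below by the $l(\cdot)$ quantities, take the supremum over $\lambda\in\Omega$, and finally replace $r(B)$ by $\tfrac{1}{2}(\|B\|+\|B^{2}\|^{1/2})$ via \eqref{ki1} with $A=I$. The delicate trade of $c_{\lambda}$ for $\|f(|A|)\|^{2}$ that you flag is handled in the same (tacit) way in the paper, and it is in fact harmless because $\|f(|A|)\|^{2}=\|f^{2}(|A|)\|$ lies in the spectrum of $f^{2}(|A|)$, so the corresponding $l$-term vanishes; likewise your reading of the exponent as $\tfrac{1}{2p}$ is what the paper's argument actually yields.
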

\begin{proof}
If $\hat{k}_{\lambda},\hat{k}_{\mu}\in {\mathcal H}$, then \eqref{al3} implies that
\begin{align*}
|\langle AB \hat{k}_{\lambda},\hat{k}_{\mu}\rangle|^{2p}&\leq r^{2p}(B)
\left[\langle f^{2p}(|A|)\hat{k}_{\lambda},\hat{k}_{\lambda}\rangle-\langle f^{2}(|A|)-\langle f^{2}(|A|)\hat{k}_{\lambda},\hat{k}_{\lambda}\rangle I|^{p}\hat{k}_{\lambda},\hat{k}_{\lambda}\rangle\right]\\&\times
\left[\langle g^{2p}(|A^{*}|)\hat{k}_{\mu},\hat{k}_{\mu}\rangle-\langle g^{2}(|A^{*}|)-\langle g^{2}(|A^{*}|)\hat{k}_{\mu},\hat{k}_{\mu}\rangle I|^{p}\hat{k}_{\mu},\hat{k}_{\mu}\rangle\right]\\&
\leq r^{2p}(B)
\left[\textbf{ber}(f^{2p}(|A|))-\langle f^{2}(|A|)-\langle f^{2}(|A|)\hat{k}_{\lambda},\hat{k}_{\lambda}\rangle I|^{p}\hat{k}_{\lambda},\hat{k}_{\lambda}\rangle\right]\\&\times
\left[\textbf{ber}(g^{2p}(|A^{*}|))-\langle g^{2}(|A^{*}|)-\langle g^{2}(|A^{*}|)\hat{k}_{\mu},\hat{k}_{\mu}\rangle I|^{p}\hat{k}_{\mu},\hat{k}_{\mu}\rangle\right].
\end{align*}
Now, let $\hat{k}_{\lambda}=\hat{k}_{\mu}$ and taking supremum over $\lambda\in \Omega$, we have
\begin{align*}
\textbf{ber}^{2p}(AB)&\leq r^{2p}(B)\left[ \textbf{ber}(f^{2p}(|A|))-l\left(|[f^{2}(|A|)-\|f(|A|)\|^{2}]|^{p}\right)\right]\\&
\times\left[ \textbf{ber}(g^{2p}(|A^{*}|))-l\left(|[g^{2}(|A^{*}|)-\|g(|A^{*}|)\|^{2}]|^{p}\right)\right].
\end{align*}
Now inequality \eqref{ki1} implies the statement.
\end{proof}
 Through following we state some refinements of Theorems \ref{th1} and \ref{th5}, which based on a refinement the Young inequality that is shown in \cite{kitta} by Kittaneh as follows:
\begin{align}\label{kitta}
a^{\alpha}b^{1-\alpha}\leq \alpha a+(1-\alpha)b-r_{0}(a^{1/2}-b^{1/2})^{2}
\end{align}
for any $a,b>0$, $0\leq \alpha\leq1$ and $r_{0}=\min\{\alpha, 1-\alpha\}$.
\begin{theorem}
Let $A,B\in {\mathcal B}(\mathcal H)$ such that $|A|B=B^{*}|A|$. If $f$ and $g$ are nonnegative continuous functions on $[0,\infty)$ satisfying $f(t)g(t)=t\,(t\geq 0)$, then
\begin{align*}
\textbf{ber}(AB)&\leq\frac{1}{2}r(B)\big(\textbf{ber}[f^{2}(|A|)+g^{2}(|A^{*}|)]-(\langle f^{2}(|A|)\hat{k}_{\lambda},\hat{k}_{\lambda}\rangle^{1/2}-\langle g^{2}(|A^{*}|)\hat{k}_{\lambda},\hat{k}_{\lambda}\rangle^{1/2})^{2}\big).
\end{align*}
In particular, for $f(t)=t^\alpha$ and $g(t)=t^{1-\alpha}$, which $0\leq\alpha\leq1$, we have
\begin{align*}
\textbf{ber}(AB)&\leq\frac{1}{2}r(B)\big(\textbf{ber}(|A|^{2\alpha}+|A^{*}|^{2(1-\alpha)})-(\langle |A|^{2\alpha}\hat{k}_{\lambda},\hat{k}_{\lambda}\rangle^{1/2}-\langle |A^{*}|^{2(1-\alpha)}\hat{k}_{\lambda},\hat{k}_{\lambda}\rangle^{1/2})^{2}\big).
\end{align*}
\end{theorem}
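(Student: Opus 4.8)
The plan is to follow the same template as the proof of Theorem~\ref{th1}, but to replace the crude arithmetic–geometric mean step by the sharper Kittaneh refinement of Young's inequality~\eqref{kitta} with exponent $\alpha=\tfrac12$ (so that $r_0=\min\{\tfrac12,\tfrac12\}=\tfrac12$). First I would fix $\lambda\in\Omega$ and put $x=\hat k_\lambda$ in Lemma~\ref{al1}, obtaining
\begin{align*}
|\langle AB\hat k_\lambda,\hat k_\lambda\rangle|
\leq r(B)\,\|f(|A|)\hat k_\lambda\|\,\|g(|A^{*}|)\hat k_\lambda\|
= r(B)\,\langle f^{2}(|A|)\hat k_\lambda,\hat k_\lambda\rangle^{1/2}\langle g^{2}(|A^{*}|)\hat k_\lambda,\hat k_\lambda\rangle^{1/2},
\end{align*}
exactly as in the first two lines of the proof of Theorem~\ref{th1}.

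Next I would apply \eqref{kitta} with $a=\langle f^{2}(|A|)\hat k_\lambda,\hat k_\lambda\rangle$, $b=\langle g^{2}(|A^{*}|)\hat k_\lambda,\hat k_\lambda\rangle$ and $\alpha=\tfrac12$. This gives
\begin{align*}
\langle f^{2}(|A|)\hat k_\lambda,\hat k_\lambda\rangle^{1/2}\langle g^{2}(|A^{*}|)\hat k_\lambda,\hat k_\lambda\rangle^{1/2}
\leq \tfrac12\langle f^{2}(|A|)\hat k_\lambda,\hat k_\lambda\rangle+\tfrac12\langle g^{2}(|A^{*}|)\hat k_\lambda,\hat k_\lambda\rangle
-\tfrac12\big(\langle f^{2}(|A|)\hat k_\lambda,\hat k_\lambda\rangle^{1/2}-\langle g^{2}(|A^{*}|)\hat k_\lambda,\hat k_\lambda\rangle^{1/2}\big)^{2}.
\end{align*}
Combining the sum of the two inner products into $\langle(f^{2}(|A|)+g^{2}(|A^{*}|))\hat k_\lambda,\hat k_\lambda\rangle$ and bounding it by $\textbf{ber}[f^{2}(|A|)+g^{2}(|A^{*}|)]$ then yields, after multiplying through by $r(B)$,
\begin{align*}
|\langle AB\hat k_\lambda,\hat k_\lambda\rangle|
\leq \tfrac12 r(B)\Big(\textbf{ber}[f^{2}(|A|)+g^{2}(|A^{*}|)]-\big(\langle f^{2}(|A|)\hat k_\lambda,\hat k_\lambda\rangle^{1/2}-\langle g^{2}(|A^{*}|)\hat k_\lambda,\hat k_\lambda\rangle^{1/2}\big)^{2}\Big).
\end{align*}
Finally I would take the supremum over $\lambda\in\Omega$ on the left-hand side to get $\textbf{ber}(AB)$, which is precisely the claimed bound; the particular case follows by inserting $f(t)=t^\alpha$, $g(t)=t^{1-\alpha}$ with $0\le\alpha\le1$ (so $f(t)g(t)=t$ and $r_0=\min\{\alpha,1-\alpha\}$, but only the $\alpha=\tfrac12$ instance is needed for the stated factor $\tfrac12$).

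The only delicate point is the handling of the supremum. The subtracted term $\big(\langle f^{2}(|A|)\hat k_\lambda,\hat k_\lambda\rangle^{1/2}-\langle g^{2}(|A^{*}|)\hat k_\lambda,\hat k_\lambda\rangle^{1/2}\big)^{2}$ still depends on $\lambda$, so strictly speaking the inequality holds for each fixed $\lambda$, and the statement should be read as ``for every $\lambda\in\Omega$'' (matching the style of the abstract, where $\hat k_\lambda$ also appears on the right). As long as we do not try to pull the $\lambda$-dependent correction outside the supremum, the argument is a routine chain of the three ingredients: Lemma~\ref{al1}, the operator–monotonicity-free step $\|Tx\|^2=\langle T^2 x,x\rangle$ for positive $T$, and the refined Young inequality~\eqref{kitta}. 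No genuine obstacle arises; this is a direct refinement of Theorem~\ref{th1} obtained by substituting the sharper scalar inequality in place of $\sqrt{ab}\le\tfrac12(a+b)$.
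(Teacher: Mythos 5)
Your proposal is correct and follows essentially the same route as the paper's own proof: substitute $x=y=\hat k_\lambda$ in Lemma 2.1, replace the arithmetic--geometric mean step of Theorem 2.3 by the Kittaneh refinement \eqref{kitta} with $\alpha=\tfrac12$ (so $r_0=\tfrac12$), and take the supremum. You also rightly flag the $\lambda$-dependence of the subtracted correction term, a point the paper's own statement and final supremum step gloss over in exactly the way you describe.
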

\begin{proof}
If we put $x=y=\hat{k}_{\lambda}$ in \eqref{al1} and applying \eqref{kitta}, we have
\begin{align}
&|\langle AB \hat{k}_{\lambda},\hat{k}_{\lambda}\rangle|\\&\leq r(B)\|f(|A|)\hat{k}_{\lambda}\|\|g(|A^{*}|)\hat{k}_{\lambda}\|\nonumber\\&
=r(B)\langle f^{2}(|A|)\hat{k}_{\lambda},\hat{k}_{\lambda}\rangle^{1/2}\langle g^{2}(|A^{*}|)\hat{k}_{\lambda},\hat{k}_{\lambda}\rangle^{1/2}\nonumber\\&
\leq\frac{1}{2}r(B)(\langle f^{2}(|A|)\hat{k}_{\lambda},\hat{k}_{\lambda}\rangle+\langle g^{2}(|A^{*}|)\hat{k}_{\lambda},\hat{k}_{\lambda}\rangle\nonumber
\\&\qquad-(\langle f^{2}(|A|)\hat{k}_{\lambda},\hat{k}_{\lambda}\rangle^{1/2}-\langle g^{2}(|A^{*}|)\hat{k}_{\lambda},\hat{k}_{\lambda}\rangle^{1/2})^{2})
\nonumber\\&
=\frac{1}{2}r(B)(\langle (f^{2}(|A|)+g^{2}(|A^{*}|))\hat{k}_{\lambda},\hat{k}_{\lambda}\rangle-(\langle f^{2}(|A|)\hat{k}_{\lambda},\hat{k}_{\lambda}\rangle^{1/2}-\langle g^{2}(|A^{*}|)\hat{k}_{\lambda},\hat{k}_{\lambda}\rangle^{1/2})^{2})\nonumber\\&
\leq\frac{1}{2}r(B)(\textbf{ber}[f^{2}(|A|)+g^{2}(|A^{*}|)]-(\langle f^{2}(|A|)\hat{k}_{\lambda},\hat{k}_{\lambda}\rangle^{1/2}-\langle g^{2}(|A^{*}|)\hat{k}_{\lambda},\hat{k}_{\lambda}\rangle^{1/2})^{2}).
\end{align}
By taking the supremum over $\lambda\in\Omega$ we get the desired inequality.
\end{proof}
\begin{theorem}
Let $A,B\in {\mathcal B}({\mathcal H})$ such that $|A|B=B^{*}|A|$. If $f$ and $g$ are nonnegative continuous functions on $[0,\infty)$ satisfying $f(t)g(t)=t\,(t\geq 0)$, then
{\begin{align*}
\textbf{ber}^{p}(AB)&\leq r^{p}(B)\Big[ \textbf{ber}(\frac{1}{\alpha}f^{\alpha p}(|A|)+\frac{1}{\beta}g^{\beta p}(|A^{*}|))\\&\qquad\qquad\qquad\qquad-r_{0}(\langle f^{2}(|A|)\hat{k}_{\lambda},\hat{k}_{\lambda}\rangle^{ \alpha p/4} -\langle g^{2}(|A^{*}|)\hat{k}_{\lambda},\hat{k}_{\lambda}\rangle^{\beta p/4})^{2}\Big].
\end{align*}}
for every $p\geq 1, \alpha\geq\beta>1$ with $\frac{1}{\alpha}+\frac{1}{\beta}=1$, $\beta p\geq2$ and $r_{0}=\min\{\frac{1}{\alpha},\frac{1}{\beta}\}$.
\end{theorem}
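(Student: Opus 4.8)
The plan is to follow the pattern of the proof of Theorem~\ref{th5} but to insert the refined Young inequality \eqref{kitta} at the step where the elementary Young inequality was used. First I would fix $\lambda\in\Omega$ and put $x=y=\hat{k}_{\lambda}$ in Lemma~\eqref{al1}, raising both sides to the $p$-th power to obtain
\[
|\langle AB\hat{k}_{\lambda},\hat{k}_{\lambda}\rangle|^{p}\leq r^{p}(B)\,\langle f^{2}(|A|)\hat{k}_{\lambda},\hat{k}_{\lambda}\rangle^{p/2}\,\langle g^{2}(|A^{*}|)\hat{k}_{\lambda},\hat{k}_{\lambda}\rangle^{p/2}.
\]
Now I would write $\langle f^{2}(|A|)\hat{k}_{\lambda},\hat{k}_{\lambda}\rangle^{p/2}=\bigl(a\bigr)^{1/\alpha}$ and $\langle g^{2}(|A^{*}|)\hat{k}_{\lambda},\hat{k}_{\lambda}\rangle^{p/2}=\bigl(b\bigr)^{1/\beta}$, i.e. set $a=\langle f^{2}(|A|)\hat{k}_{\lambda},\hat{k}_{\lambda}\rangle^{\alpha p/2}$ and $b=\langle g^{2}(|A^{*}|)\hat{k}_{\lambda},\hat{k}_{\lambda}\rangle^{\beta p/2}$, so that the product equals $a^{1/\alpha}b^{1/\beta}=a^{1/\alpha}b^{1-1/\alpha}$ with exponent $\tfrac1\alpha\in[0,1]$.

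Applying \eqref{kitta} with this choice of $a,b$ and $\alpha\rightsquigarrow 1/\alpha$ gives
\[
a^{1/\alpha}b^{1/\beta}\leq\tfrac1\alpha a+\tfrac1\beta b-r_{0}\bigl(a^{1/2}-b^{1/2}\bigr)^{2},
\]
with $r_{0}=\min\{\tfrac1\alpha,\tfrac1\beta\}$, and here $a^{1/2}=\langle f^{2}(|A|)\hat{k}_{\lambda},\hat{k}_{\lambda}\rangle^{\alpha p/4}$, $b^{1/2}=\langle g^{2}(|A^{*}|)\hat{k}_{\lambda},\hat{k}_{\lambda}\rangle^{\beta p/4}$, which produces exactly the subtracted term in the statement. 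For the main terms $\tfrac1\alpha a+\tfrac1\beta b$, I would use the operator convexity/concavity bounds already exploited in Theorem~\ref{th5}: since $\beta p\geq 2$ and (by $\alpha\geq\beta$) also $\alpha p\geq 2$, the function $t\mapsto t^{\alpha p/2}$ and $t\mapsto t^{\beta p/2}$ are convex on $[0,\infty)$, so by the Jensen/Hansen--Pedersen inequality for the vector state $\langle\,\cdot\,\hat{k}_{\lambda},\hat{k}_{\lambda}\rangle$ one has $\langle f^{2}(|A|)\hat{k}_{\lambda},\hat{k}_{\lambda}\rangle^{\alpha p/2}\le\langle f^{\alpha p}(|A|)\hat{k}_{\lambda},\hat{k}_{\lambda}\rangle$ and likewise for $g$. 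Hence
\[
a^{1/\alpha}b^{1/\beta}\leq\Bigl\langle\bigl(\tfrac1\alpha f^{\alpha p}(|A|)+\tfrac1\beta g^{\beta p}(|A^{*}|)\bigr)\hat{k}_{\lambda},\hat{k}_{\lambda}\Bigr\rangle-r_{0}\bigl(a^{1/2}-b^{1/2}\bigr)^{2}\le\textbf{ber}\bigl(\tfrac1\alpha f^{\alpha p}(|A|)+\tfrac1\beta g^{\beta p}(|A^{*}|)\bigr)-r_{0}\bigl(a^{1/2}-b^{1/2}\bigr)^{2}.
\]

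Combining, $|\langle AB\hat{k}_{\lambda},\hat{k}_{\lambda}\rangle|^{p}$ is bounded by $r^{p}(B)$ times the bracket in the statement, and taking the supremum over $\lambda\in\Omega$ yields $\textbf{ber}^{p}(AB)$ on the left. The one subtlety I expect to be the main obstacle is the bookkeeping in the subtracted term: after taking the supremum, the refinement term $r_{0}(a^{1/2}-b^{1/2})^{2}$ is still evaluated at the \emph{same} $\lambda$ that realizes (or approaches) the supremum of the first term, so strictly the inequality reads $\textbf{ber}^{p}(AB)\le r^p(B)\,(\,\sup_\lambda[\,\cdots\,]\,)$ with the two pieces sharing $\lambda$; one must argue — exactly as in the earlier refined theorem of this section — that the displayed form is the intended reading, or equivalently bound the subtracted term below by $\inf_\lambda$ of it. No new ideas beyond \eqref{al1}, \eqref{kitta}, and operator convexity are needed; it is the correct packaging of these three ingredients.
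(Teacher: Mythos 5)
Your proposal is correct and follows essentially the same route as the paper's own proof: substitute $x=y=\hat{k}_{\lambda}$ in \eqref{al1}, apply the refined Young inequality \eqref{kitta} with exponent $1/\alpha$ to $a=\langle f^{2}(|A|)\hat{k}_{\lambda},\hat{k}_{\lambda}\rangle^{\alpha p/2}$ and $b=\langle g^{2}(|A^{*}|)\hat{k}_{\lambda},\hat{k}_{\lambda}\rangle^{\beta p/2}$, pass to $\langle f^{\alpha p}(|A|)\hat{k}_{\lambda},\hat{k}_{\lambda}\rangle$ via the McCarthy power inequality (using $\alpha p\geq\beta p\geq2$), and take the supremum. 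The $\lambda$-dependence of the subtracted term that you flag is a genuine imprecision, but it is present in the paper's statement and proof as well, so your treatment is no weaker than the original.
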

\begin{proof}
Let $\hat{k}_{\lambda}\in {\mathcal H}$. We have
{\begin{align*}
&|\langle AB\hat{k}_{\lambda},\hat{k}_{\lambda}\rangle|^{p}\\&\leq r^{p}(B)\|f(|A|)\hat{k}_{\lambda}\|^{p}\|g(|A^{*}|)\hat{k}_{\lambda}\|^{p}\\&
=r^{p}(B)\langle f^{2}(|A|)\hat{k}_{\lambda},\hat{k}_{\lambda}\rangle^{p/2}\langle g^{2}(|A^{*}|)\hat{k}_{\lambda},\hat{k}_{\lambda}\rangle^{p/2}\\&
\leq r^{p}(B)\left[\frac{1}{\alpha}\langle f^{2}(|A|)\hat{k}_{\lambda},\hat{k}_{\lambda}\rangle^{\alpha p/2}+\frac{1}{\beta}\langle g^{2}(|A^{*}|)\hat{k}_{\lambda},\hat{k}_{\lambda}\rangle^{\beta p/2}\right]
\end{align*}}
{\footnotesize\begin{align*}
&\leq r^{p}(B)\big[\frac{1}{\alpha}\langle f^{2}(|A|)\hat{k}_{\lambda},\hat{k}_{\lambda}\rangle^{\alpha p/2}+\frac{1}{\beta}\langle g^{2}(|A^{*}|)\hat{k}_{\lambda},\hat{k}_{\lambda}\rangle^{\beta p/2}\\&\qquad\qquad\qquad\qquad\qquad-r_{0}(\langle f^{2}(|A|)\hat{k}_{\lambda},\hat{k}_{\lambda}\rangle^{ \alpha p/4} -\langle g^{2}(|A^{*}|)\hat{k}_{\lambda},\hat{k}_{\lambda}\rangle^{\beta p/4})^{2} \big]\\&
\leq r^{p}(B)\Big[\frac{1}{\alpha}\langle f^{\alpha p}(|A|)\hat{k}_{\lambda},\hat{k}_{\lambda}\rangle+\frac{1}{\beta}\langle g^{\beta p}(|A^{*}|)\hat{k}_{\lambda},\hat{k}_{\lambda}\rangle\\&\qquad\qquad\qquad\qquad\qquad -r_{0}(\langle f^{2}(|A|)\hat{k}_{\lambda},\hat{k}_{\lambda}\rangle^{ \alpha p/4} -\langle g^{2}(|A^{*}|)\hat{k}_{\lambda},\hat{k}_{\lambda}\rangle^{\beta p/4})^{2}\Big]\\&
\leq r^{p}(B)\Big[\frac{1}{\alpha}\langle f^{\alpha p}(|A|)\hat{k}_{\lambda},\hat{k}_{\lambda}\rangle+\frac{1}{\beta}\langle g^{\beta p}(|A^{*}|)\hat{k}_{\lambda},\hat{k}_{\lambda}\rangle\\&\qquad\qquad\qquad\qquad\qquad -r_{0}(\langle f^{2}(|A|)\hat{k}_{\lambda},\hat{k}_{\lambda}\rangle^{ \alpha p/4} -\langle g^{2}(|A^{*}|)\hat{k}_{\lambda},\hat{k}_{\lambda}\rangle^{\beta p/4})^{2} \Big]\\&
=r^{p}(B)\langle [\frac{1}{\alpha}f^{\alpha p}(|A|)+\frac{1}{\beta}g^{\beta p}(|A^{*}|)]\hat{k}_{\lambda},\hat{k}_{\lambda}\rangle -r_{0}(\langle f^{2}(|A|)\hat{k}_{\lambda},\hat{k}_{\lambda}\rangle^{ \alpha p/4} -\langle g^{2}(|A^{*}|)\hat{k}_{\lambda},\hat{k}_{\lambda}\rangle^{\beta p/4})^{2}.\\&
\leq r^{p}(B)\left[\textbf{ber}(\frac{1}{\alpha}f^{\alpha p}(|A|)+\frac{1}{\beta}g^{\beta p}(|A^{*}|))-r_{0}(\langle f^{2}(|A|)\hat{k}_{\lambda},\hat{k}_{\lambda}\rangle^{ \alpha p/4} -\langle g^{2}(|A^{*}|)\hat{k}_{\lambda},\hat{k}_{\lambda}\rangle^{\beta p/4})^{2}\right].
\end{align*}}
By taking the supremum over $\lambda\in\Omega$ we get the desired result.
\end{proof}
\section{Number Berezin inequalities involving off diagonal matrices}
In this section, we improve and extend some Berezin number inequalities for $2\times2$ off diagonal matrices by nonnegative increasing convex functions. We  recall that the polarization identity says that,
\begin{align}\label{111}
\langle x,y\rangle=\frac{1}{4}\sum_{i=1}^{3}i^{k}\|x+i^{k}y\|^{2}\qquad(x,y\in \mathcal{H}).
\end{align}
For our goals, we need to the following lemmas.
\begin{lemma}\cite{Ba}\label{9}
Let  $A\in {\mathcal B}({\mathcal H_1})$, $B\in {\mathcal B}({\mathcal H_2}, {\mathcal H_1})$, $C\in {\mathcal B}({\mathcal H_1},{\mathcal H_2})$ and $D\in {\mathcal B}({\mathcal H_2})$. Then the following statements hold:\\
$(a)\,\,\textbf{ber}\left(\left[\begin{array}{cc}
 A&0\\
 0&D
\end{array}\right]\right)$
$\leq \max \{\textbf{ber}(A), \textbf{ber}(D)\};$
\\
\\
$(b)\,\,\textbf{ber}\left(\left[\begin{array}{cc}
0&B\\
C&0
\end{array}\right]\right)$
$\leq$ $ \frac{1}{2}(\|B\|+\|C\|);$\\
$(c)\,\,\textbf{ber}(A)=\sup_{\theta\in \mathbb{R}}\textbf{ber}(Re(e^{i\theta}A))$.
\end{lemma}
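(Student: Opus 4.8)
The plan is to prove the three assertions separately, exploiting throughout the single structural fact that a normalized reproducing kernel $\hat{k}_{\lambda}$ of the direct sum $\mathcal H=\mathcal H_1\oplus\mathcal H_2$ is in particular a unit vector, and therefore splits as $\hat{k}_{\lambda}=u\oplus v$ with $u\in\mathcal H_1$, $v\in\mathcal H_2$ and $\|u\|^{2}+\|v\|^{2}=1$. In the standard realization of $\mathcal H_1\oplus\mathcal H_2$ as an RKHS over the disjoint union $\Omega_1\sqcup\Omega_2$, each such kernel lies entirely in one summand: either $\hat{k}_{\lambda}=\hat{k}^{(1)}_{\lambda}\oplus 0$ with $\lambda\in\Omega_1$, or $\hat{k}_{\lambda}=0\oplus\hat{k}^{(2)}_{\lambda}$ with $\lambda\in\Omega_2$, where $\hat{k}^{(i)}$ denotes a normalized reproducing kernel of $\mathcal H_i$. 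These are the Berezin-number analogues of classical numerical-radius identities, and that analogy dictates all three computations.

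For $(a)$ I would simply evaluate the Berezin symbol of the diagonal matrix on a localized kernel. If $\hat{k}_{\lambda}=\hat{k}^{(1)}_{\lambda}\oplus 0$ then
\begin{align*}
\left\langle\left[\begin{array}{cc}A&0\\0&D\end{array}\right](\hat{k}^{(1)}_{\lambda}\oplus 0),\hat{k}^{(1)}_{\lambda}\oplus 0\right\rangle=\langle A\hat{k}^{(1)}_{\lambda},\hat{k}^{(1)}_{\lambda}\rangle=\widetilde A(\lambda),
\end{align*}
and likewise the value equals $\widetilde D(\lambda)$ when $\hat{k}_{\lambda}=0\oplus\hat{k}^{(2)}_{\lambda}$. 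Hence the Berezin set of the diagonal matrix is contained in $\textbf{Ber}(A)\cup\textbf{Ber}(D)$, and taking the supremum of moduli gives $\max\{\textbf{ber}(A),\textbf{ber}(D)\}$ (in fact with equality), which is $(a)$.

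For $(b)$ I would keep the general splitting $\hat{k}_{\lambda}=u\oplus v$ and compute
\begin{align*}
\left|\left\langle\left[\begin{array}{cc}0&B\\C&0\end{array}\right]\hat{k}_{\lambda},\hat{k}_{\lambda}\right\rangle\right|=|\langle Bv,u\rangle+\langle Cu,v\rangle|\leq(\|B\|+\|C\|)\,\|u\|\,\|v\|\leq\tfrac12(\|B\|+\|C\|),
\end{align*}
using the Cauchy--Schwarz inequality together with $\|Bv\|\le\|B\|\|v\|$, $\|Cu\|\le\|C\|\|u\|$, and finally $\|u\|\|v\|\le\frac12(\|u\|^{2}+\|v\|^{2})=\frac12$. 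Taking the supremum over $\lambda$ yields $(b)$; note that this step uses only that $\hat{k}_{\lambda}$ is a unit vector, so it is exactly the Berezin-number form of the classical numerical-radius estimate for off-diagonal operator matrices. Part $(c)$, which is independent of the block structure, I would derive from the elementary identity $\widetilde{\mathrm{Re}(e^{i\theta}A)}(\lambda)=\mathrm{Re}\!\big(e^{i\theta}\widetilde A(\lambda)\big)$, immediate from linearity of the Berezin symbol and $\widetilde{A^{*}}(\lambda)=\overline{\widetilde A(\lambda)}$. Then
\begin{align*}
\sup_{\theta\in\mathbb R}\textbf{ber}(\mathrm{Re}(e^{i\theta}A))=\sup_{\theta\in\mathbb R}\sup_{\lambda\in\Omega}\big|\mathrm{Re}(e^{i\theta}\widetilde A(\lambda))\big|=\sup_{\lambda\in\Omega}\sup_{\theta\in\mathbb R}\big|\mathrm{Re}(e^{i\theta}\widetilde A(\lambda))\big|=\sup_{\lambda\in\Omega}|\widetilde A(\lambda)|=\textbf{ber}(A),
\end{align*}
where the interchange of suprema is harmless and the penultimate equality uses $\sup_{\theta}|\mathrm{Re}(e^{i\theta}z)|=|z|$ for $z\in\mathbb C$.

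Each estimate is short once the splitting is in place, so the only point requiring genuine care is the structural one identified above: to know that a normalized reproducing kernel of $\mathcal H_1\oplus\mathcal H_2$ decomposes as $u\oplus v$ and, for the sharp form of $(a)$, that the restriction of such a kernel to each summand is again a scalar multiple of a reproducing kernel there, so that $\langle Au,u\rangle$ and $\langle Dv,v\rangle$ are genuinely values of the Berezin symbols of $A$ and $D$ (this is what upgrades a crude $\max\{\|A\|,\|D\|\}$ or numerical-radius bound to $\max\{\textbf{ber}(A),\textbf{ber}(D)\}$). Pinning down this behaviour of the direct-sum kernels is the main obstacle; after that, $(b)$ and $(c)$ rest only on the unit-norm of reproducing kernels and on the scalar identity $\sup_{\theta}|\mathrm{Re}(e^{i\theta}z)|=|z|$.
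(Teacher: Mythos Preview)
The paper does not prove this lemma; it is quoted from \cite{Ba} and stated without argument, so there is no in-paper proof to compare your attempt against.

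Your proofs of (a), (b), and (c) are each correct as written, but there is a mild internal tension in the choice of RKHS structure on $\mathcal H_1\oplus\mathcal H_2$. For (a) you invoke the disjoint-union model $\Omega_1\sqcup\Omega_2$, under which every normalized kernel is localized in one summand; your computation then gives (a) (indeed with equality). But in that same model the Berezin symbol of an off-diagonal block matrix is identically zero, so (b) is trivial and your general $u\oplus v$ estimate is superfluous there. Conversely, the present paper, immediately after this lemma, adopts the \emph{product} model $\Omega_1\times\Omega_2$ with $\hat k_{(\lambda_1,\lambda_2)}=(k_{\lambda_1},k_{\lambda_2})/\sqrt{\|k_{\lambda_1}\|^{2}+\|k_{\lambda_2}\|^{2}}$, in which both components are typically nonzero. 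In that model your argument for (b) is exactly what is needed, while (a) requires a slight modification: the diagonal Berezin symbol becomes the convex combination
\[
t\,\widetilde A(\lambda_1)+(1-t)\,\widetilde D(\lambda_2),\qquad t=\frac{\|k_{\lambda_1}\|^{2}}{\|k_{\lambda_1}\|^{2}+\|k_{\lambda_2}\|^{2}},
\]
which is bounded in modulus by $\max\{\textbf{ber}(A),\textbf{ber}(D)\}$. Your proof of (c) is model-independent and correct as stated. So the only refinement I would suggest is to fix one model at the outset and carry it through all three parts consistently.
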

\begin{lemma}\cite{sheb}\label{convex}
Let h be a nonnegative nondecreasing convex function on $[0,\infty)$ and let $A,B\in \mathcal{B}(\mathcal{H})$ be positive operators. Then
\begin{align*}
h\left( \left\|\frac{A+B}{2}\right\|\right)\leq\left\|\frac{h(A)+h(B)}{2}\right\|.
\end{align*}
\end{lemma}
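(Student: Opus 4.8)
The plan is to reduce this operator-norm inequality to a scalar convexity statement by exploiting that for a positive operator $T$ one has $\|T\|=\sup_{\|x\|=1}\langle Tx,x\rangle$. Set $C=\frac{A+B}{2}$, which is positive since $A,B\ge0$, and record that for every unit vector $x$,
\[
\langle Cx,x\rangle=\frac{\langle Ax,x\rangle+\langle Bx,x\rangle}{2}.
\]
Because $h\ge0$ the operators $h(A),h(B)$ are positive, so $\frac{h(A)+h(B)}{2}$ is positive as well, and the target inequality is equivalent to bounding $h\big(\sup_{\|x\|=1}\langle Cx,x\rangle\big)$ from above by $\sup_{\|x\|=1}\big\langle\frac{h(A)+h(B)}{2}x,x\big\rangle$.

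First I would move $h$ through the supremum. Since $h$ is nondecreasing and, being convex on $[0,\infty)$, continuous on the interior, the identity $h(\sup_\alpha a_\alpha)=\sup_\alpha h(a_\alpha)$ for a nondecreasing continuous $h$ applies and yields
\[
h(\|C\|)=\sup_{\|x\|=1}h\big(\langle Cx,x\rangle\big).
\]
For a fixed unit vector $x$, scalar convexity of $h$ applied to the average above gives
\[
h\big(\langle Cx,x\rangle\big)=h\Big(\tfrac{\langle Ax,x\rangle+\langle Bx,x\rangle}{2}\Big)\le\frac{h(\langle Ax,x\rangle)+h(\langle Bx,x\rangle)}{2}.
\]

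The analytic heart is a scalar Jensen inequality for states: for a positive operator $A$ and a unit vector $x$, the map $S\mapsto\langle E_A(S)x,x\rangle$ is a probability measure on the spectrum with barycenter $\langle Ax,x\rangle$, so the classical Jensen inequality for the convex $h$ gives $h(\langle Ax,x\rangle)\le\langle h(A)x,x\rangle$, and similarly for $B$. Substituting these into the previous bound produces
\[
h\big(\langle Cx,x\rangle\big)\le\Big\langle\tfrac{h(A)+h(B)}{2}\,x,x\Big\rangle,
\]
and taking the supremum over unit vectors on both sides yields exactly $h(\|C\|)\le\big\|\frac{h(A)+h(B)}{2}\big\|$. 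The main obstacle I anticipate is justifying the state-Jensen step $h(\langle Ax,x\rangle)\le\langle h(A)x,x\rangle$ for a merely convex (not operator-convex) $h$, together with the monotonicity–continuity argument interchanging $h$ with the supremum; both are handled cleanly through the spectral theorem, the former by applying the scalar Jensen inequality to the probability measure $\langle E_A(\cdot)x,x\rangle$, so that no operator convexity of $h$ is required.
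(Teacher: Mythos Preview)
Your argument is correct. The continuity of $h$ at $0$ (needed to interchange $h$ with the supremum) follows, as you indicate, from convexity together with monotonicity: for $0<s<t$ convexity gives $h(s)\le\frac{t-s}{t}h(0)+\frac{s}{t}h(t)$, whence $\limsup_{s\to0^+}h(s)\le h(0)$, while nondecreasing gives the reverse inequality. The spectral Jensen step $h(\langle Ax,x\rangle)\le\langle h(A)x,x\rangle$ is exactly the scalar Jensen inequality applied to the probability measure $\mu_x=\langle E_A(\cdot)x,x\rangle$, and requires only ordinary convexity of $h$, not operator convexity; this is the standard Berezin--Davis/Hansen--Pedersen type inequality and is perfectly valid here.

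As for comparison: the paper does not give any proof of this lemma. It is quoted verbatim from Kosem's paper \cite{sheb} and used as a black box in the proof of Theorem~3.3. So there is nothing in the present paper to compare your argument against. For what it is worth, your route is essentially the natural one and is close in spirit to how such norm inequalities are typically derived in the literature (reduce to numerical-range values via positivity, apply scalar convexity, then push back up with the state-Jensen inequality).
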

\begin{theorem}
Let
$T=\left[\begin{array}{cc}
 0&B\\
 C&0
 \end{array}\right]\in {\mathcal B}(\mathcal H(\Omega_{1})\oplus\mathcal H(\Omega_{2}))$ and  $f$, $g$ be nonnegative  continuous  functions on $[0, \infty)$ satisfying the relation $f(t)g(t)=t\,\,(t\in [0, \infty))$. Then
 \begin{align}\label{7}
h(\textbf{ber}(T))\leq \frac{1}{4}\|h(f^{2}(|C|))+h(g^{2}(|C|))\|+\frac{1}{4}\|h(f^{2}(|B|))+h(g^{2}(|B|))\|.
\end{align}
\end{theorem}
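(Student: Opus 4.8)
The plan is to combine the structural properties of the off-diagonal operator $T$ from Lemma~\ref{9} with the factorization inequality \eqref{al1} and the convexity inequality of Lemma~\ref{convex}. First I would recall that for $T=\left[\begin{smallmatrix} 0&B\\ C&0\end{smallmatrix}\right]$ one has $|T|=\left[\begin{smallmatrix} |C|&0\\ 0&|B|\end{smallmatrix}\right]$ and $|T^{*}|=\left[\begin{smallmatrix} |B|&0\\ 0&|C|\end{smallmatrix}\right]$, so that in particular $|T|\cdot T = T^{*}\cdot|T|$ holds automatically (since $T$ is normal up to the obvious identity, or by direct block computation). Hence Lemma~\ref{al1} applies with $A=B=T$ in the sense that $|\langle T\hat k_{\lambda},\hat k_{\lambda}\rangle|\le r(I)\,\|f(|T|)\hat k_{\lambda}\|\,\|g(|T^{*}|)\hat k_{\lambda}\|$; more to the point, I would use the scalar bound $|\langle T\hat k_{\lambda},\hat k_{\lambda}\rangle| \le \|f(|T|)\hat k_{\lambda}\|\,\|g(|T^{*}|)\hat k_{\lambda}\| \le \tfrac12\big(\|f(|T|)\hat k_{\lambda}\|^{2}+\|g(|T^{*}|)\hat k_{\lambda}\|^{2}\big)$, which after taking the supremum and using Lemma~\ref{9}(a) on the block-diagonal operators $f^{2}(|T|)+g^{2}(|T^{*}|)=\left[\begin{smallmatrix} f^{2}(|C|)+g^{2}(|B|)&0\\ 0&f^{2}(|B|)+g^{2}(|C|)\end{smallmatrix}\right]$ gives $\textbf{ber}(T)\le \tfrac12\max\{\textbf{ber}(f^{2}(|C|)+g^{2}(|B|)),\ \textbf{ber}(f^{2}(|B|)+g^{2}(|C|))\}$.

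Next I would apply $h$, a nonnegative nondecreasing convex function, to both sides. Since $h$ is nondecreasing, $h(\textbf{ber}(T))$ is bounded by $h$ evaluated at the right-hand maximum, and since the maximum of two numbers is bounded by their sum (or one splits the $\max$ into the two cases), it suffices to bound $h\big(\tfrac12\textbf{ber}(f^{2}(|C|)+g^{2}(|B|))\big)$ and the symmetric term. Now $\textbf{ber}(S)\le\|S\|$ for any $S$, so $\tfrac12\textbf{ber}(f^{2}(|C|)+g^{2}(|B|))\le \big\|\tfrac{f^{2}(|C|)+g^{2}(|B|)}{2}\big\|$, and Lemma~\ref{convex} with $A=f^{2}(|C|)$, $B=g^{2}(|B|)$ (both positive) yields $h\big(\big\|\tfrac{f^{2}(|C|)+g^{2}(|B|)}{2}\big\|\big)\le \big\|\tfrac{h(f^{2}(|C|))+h(g^{2}(|B|))}{2}\big\| = \tfrac12\|h(f^{2}(|C|))+h(g^{2}(|B|))\|$.

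Finally I would assemble the two estimates. Bounding the $\max$ by the sum of the two terms and halving appropriately, one gets
\begin{align*}
h(\textbf{ber}(T)) &\le \tfrac14\|h(f^{2}(|C|))+h(g^{2}(|B|))\| + \tfrac14\|h(f^{2}(|B|))+h(g^{2}(|C|))\|,
\end{align*}
which is the asserted inequality up to the obvious relabeling of which of $|B|,|C|$ carries $f$ versus $g$ inside each norm (the hypothesis $f(t)g(t)=t$ is symmetric in the roles of $f$ and $g$, so the two displayed groupings are interchangeable). The main obstacle I anticipate is the bookkeeping of the $\max$: one must be careful that the factor of $\tfrac14$ in the conclusion is genuinely produced by combining the $\tfrac12$ from the arithmetic–geometric step with the $\tfrac12$ from Lemma~\ref{convex}, and that passing from $\max\{a,b\}$ to $\tfrac12(a+b)$ (or handling the two cases separately) is done without losing a constant. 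A secondary point to verify carefully is the block computation $|T|=\mathrm{diag}(|C|,|B|)$ and $|T^{*}|=\mathrm{diag}(|B|,|C|)$, and that the relation $|T|\,T=T^{*}|T|$ needed to invoke \eqref{al1} indeed holds for this $T$; this is a short direct calculation on $2\times2$ blocks.
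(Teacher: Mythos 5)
Your proposal has two genuine gaps, and together they prevent it from reaching \eqref{7} as stated. First, the route through the mixed Schwarz inequality $|\langle T\hat k,\hat k\rangle|\le\|f(|T|)\hat k\|\,\|g(|T^{*}|)\hat k\|$ (this is \eqref{al1} with $B=I$; your auxiliary claim $|T|\,T=T^{*}|T|$ is neither needed for that nor true in general) produces the block diagonal operator $f^{2}(|T|)+g^{2}(|T^{*}|)$, whose blocks are $f^{2}(|C|)+g^{2}(|B^{*}|)$ and $f^{2}(|B|)+g^{2}(|C^{*}|)$ --- note $|T^{*}|=\left[\begin{smallmatrix}|B^{*}|&0\\0&|C^{*}|\end{smallmatrix}\right]$, not $\left[\begin{smallmatrix}|B|&0\\0&|C|\end{smallmatrix}\right]$. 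So your bound pairs $f$ and $g$ at \emph{different} moduli, whereas the theorem pairs them at the \emph{same} modulus ($f^{2}(|C|)$ with $g^{2}(|C|)$, $f^{2}(|B|)$ with $g^{2}(|B|)$). The closing appeal to the symmetry of $f$ and $g$ does not repair this: replacing $(f,g)$ by $(g,f)$ merely permutes your two mixed terms and never converts $\|h(f^{2}(|C|))+h(g^{2}(|B|))\|$ into $\|h(f^{2}(|C|))+h(g^{2}(|C|))\|$, which are different quantities in general. Second, the constants do not close: your chain gives $h(\textbf{ber}(T))\le\max\{u,v\}$ with $u=\tfrac12\|h(\cdot)+h(\cdot)\|$ and $v=\tfrac12\|h(\cdot)+h(\cdot)\|$, and from a maximum you can only pass to $u+v$ (coefficients $\tfrac12$), not to $\tfrac{u+v}{2}$ (coefficients $\tfrac14$), since $\max\{u,v\}\ge\tfrac{u+v}{2}$; the ``halving appropriately'' step goes the wrong way, exactly at the point you yourself flagged as delicate. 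At best your argument yields a weaker inequality, with coefficients $\tfrac12$ and with mixed, adjoint moduli.

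The paper avoids both problems by never splitting $T$ into $|T|$ and $|T^{*}|$. It uses the polar decompositions $B=U|B|$, $C=V|C|$, so that $T=W|T|$ with $W=\left[\begin{smallmatrix}0&U\\V&0\end{smallmatrix}\right]$ and $|T|=\left[\begin{smallmatrix}|C|&0\\0&|B|\end{smallmatrix}\right]$, writes $T=Wf(|T|)g(|T|)$, and bounds $\langle \mathrm{Re}(e^{i\theta}T)\hat k_{(\lambda_{1},\lambda_{2})},\hat k_{(\lambda_{1},\lambda_{2})}\rangle$ via the polarization identity \eqref{111}; the cross terms then pair $g(|C|)k_{\lambda_{1}}$ with $f(|C|)V^{*}k_{\lambda_{2}}$ and $g(|B|)k_{\lambda_{2}}$ with $f(|B|)U^{*}k_{\lambda_{1}}$, i.e.\ both functions stay attached to the same modulus. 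Estimating the two row operators gives $\|g^{2}(|C|)+f(|C|)V^{*}Vf(|C|)\|\le\|f^{2}(|C|)+g^{2}(|C|)\|$ and similarly for $B$, so one obtains the \emph{sum} $\tfrac14\|f^{2}(|C|)+g^{2}(|C|)\|+\tfrac14\|f^{2}(|B|)+g^{2}(|B|)\|$ directly (the $\tfrac14$'s come from polarization), not a maximum; Lemma \ref{9}(c), convexity of $h$, and Lemma \ref{convex} then yield \eqref{7}. If you want to salvage your scheme, you must replace the $|T|$--$|T^{*}|$ splitting and the reduction through Lemma \ref{9}(a) by this polar-decomposition pairing.
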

\begin{proof}
Let $B=U|B|$ and $C=V|C|$ be the polar decomposition of operators $B$ and $C$. Then $T=W|T|=\left[\begin{array}{cc}
 0&U\\
 V&0
 \end{array}\right]\left[\begin{array}{cc}
 |C|&0\\
 0&|B|
 \end{array}\right]$ is the polar decomposition of $T$.\\
For any $(\lambda_{1},\lambda_{2})\in \Omega_{1}\times\Omega_{2}$, let $\hat{k}_{(\lambda_{1},\lambda_{2})}=\left[\begin{array}{cc}
 k_{\lambda_{1}}\\
 k_{\lambda_{2}}
 \end{array}\right]$ be the normalized reproducing kernel in $\mathcal H(\Omega_{1})\oplus\mathcal H(\Omega_{2})$. Then
 \begin{align*}
 \langle &\texttt{Re}\,e^{i\theta}T\hat{k}_{(\lambda_{1},\lambda_{2})},\hat{k}_{(\lambda_{1},\lambda_{2})}\rangle\\&=\texttt{Re}\langle e^{i\theta}W|T|\hat{k}_{(\lambda_{1},\lambda_{2})},\hat{k}_{(\lambda_{1},\lambda_{2})}\rangle\\&
 =\texttt{Re}\langle e^{i\theta}W f(|T|)g(|T|)\hat{k}_{(\lambda_{1},\lambda_{2})},\hat{k}_{(\lambda_{1},\lambda_{2})}\rangle\\&
 =\texttt{Re}\langle e^{i\theta}g(|T|)\hat{k}_{(\lambda_{1},\lambda_{2})},f(|T|)W^{*}\hat{k}_{(\lambda_{1},\lambda_{2})}\rangle\\&
 =\texttt{Re}\left\langle e^{i\theta}\left[\begin{array}{cc}
 g(|C|)&0\\
 0&g(|B|)
 \end{array}\right]\left[\begin{array}{cc}
 k_{\lambda_{1}}\\
 k_{\lambda_{2}}
 \end{array}\right],\left[\begin{array}{cc}
 f(|C|)&0\\
 0&f(|B|)
 \end{array}\right]\left[\begin{array}{cc}
 0&V^{*}\\
 U^{*}&0
 \end{array}\right]\left[\begin{array}{cc}
 k_{\lambda_{1}}\\
 k_{\lambda_{2}}
 \end{array}\right]\right\rangle\\&
 =\texttt{Re} \langle e^{i\theta}(g(|C|)k_{\lambda_{1}},g(|B|)k_{\lambda_{2}}),(f(|C|)V^{*}k_{\lambda_{2}},f(|B|)U^{*}k_{\lambda_{1}})\rangle\\&
 =\texttt{Re} (\langle e^{i\theta}g(|C|)k_{\lambda_{1}},f(|C|)V^{*}k_{\lambda_{2}}\rangle+\langle e^{i\theta}g(|B|)k_{\lambda_{2}},f(|B|)U^{*}k_{\lambda_{1}}\rangle)\\&
 =\frac{1}{4}\left( \|e^{i\theta}g(|C|)k_{\lambda_{1}}+f(|C|)V^{*}k_{\lambda_{2}}\|^{2}-\|e^{i\theta}g(|C|)k_{\lambda_{1}}-f(|C|)V^{*}k_{\lambda_{2}}\|^{2}\right)\\&
 +\frac{1}{4}\left( \|e^{i\theta}g(|B|)k_{\lambda_{2}}+f(|B|)U^{*}k_{\lambda_{1}}\|^{2}-\|e^{i\theta}g(|B|)k_{\lambda_{2}}-f(|B|)U^{*}k_{\lambda_{1}}\|^{2}\right)\\&
 \qquad\qquad\qquad (\textrm {by \eqref{111}})\\&
 \leq \frac{1}{4} \|e^{i\theta}g(|C|)k_{\lambda_{1}}+f(|C|)V^{*}k_{\lambda_{2}}\|^{2}+\frac{1}{4} \|e^{i\theta}g(|B|)k_{\lambda_{2}}+f(|B|)U^{*}k_{\lambda_{1}}\|^{2}\\&
 =\frac{1}{4} \Big\|[e^{i\theta}g(|C|)  f(|C|)V^{*}]\left[\begin{array}{cc}
 k_{\lambda_{1}}\\
 k_{\lambda_{2}}
 \end{array}\right]\Big\|^{2}+\frac{1}{4} \Big\|[e^{i\theta}g(|B|)  f(|B|)U^{*}\left[\begin{array}{cc}
 k_{\lambda_{1}}\\
 k_{\lambda_{2}}
 \end{array}\right]\Big\|^{2}\\&
 \leq\frac{1}{4}\|[e^{i\theta}g(|C|)  f(|C|)V^{*}]\|^{2}+\frac{1}{4} \|[e^{i\theta}g(|B|)  f(|B|)U^{*}\|^{2}\\&
 =\frac{1}{4} \Big\|[e^{i\theta}g(|C|)  f(|C|)V^{*}]\left[\begin{array}{cc}
 e^{i\theta}g(|C|)\\
 Vf(|C|)
 \end{array}\right]\Big\|+\frac{1}{4} \Big\|[e^{i\theta}g(|B|)  f(|B|)U^{*}\left[\begin{array}{cc}
 Uf(|B|)\\
 e^{-i\theta}g(|B|)
 \end{array}\right]\Big\|
 \end{align*}
 \begin{align*}
&=\frac{1}{4}\|g^{2}(|C|)+f(|C|)V^{*}Vf(|C|)\|+\frac{1}{4}\|f(|B|)U^{*}Uf(|B|)+g^{2}(|B|)\|\\&
 =\frac{1}{4}\|f^{2}(|C|)+g^{2}(|C|)\|+\frac{1}{4}\|f^{2}(|B|)+g^{2}(|B|)\|.
 \end{align*}
 By taking the supremun over all $\lambda \in \Omega$, Lemma \ref{9}(c) and applying Lemma \ref{convex} for any nondecreasing convex function $h$, we get the desired result.
\end{proof}
\begin{corollary}
Let $T=\left[\begin{array}{cc}
 0&B\\
 C&0
 \end{array}\right]\in {\mathcal B}(\mathcal H(\Omega_{1})\oplus\mathcal H(\Omega_{2}))$. Then for any $\alpha\in [0,1]$ and $p\geq1$,
 \begin{align*}
 \textbf{ber}^{p}\left(\left[\begin{array}{cc}
 0&B\\
 C&0
 \end{array}\right]\right)\leq\frac{1}{4}\||B|^{2p\alpha}+|B|^{2p(1-\alpha)}\|+\frac{1}{4}\||C|^{2p\alpha}+|C|^{2p(1-\alpha)}\|.
 \end{align*}
\end{corollary}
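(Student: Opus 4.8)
The plan is to obtain this corollary as a direct specialization of the preceding theorem, which already delivers the off-diagonal Berezin estimate \eqref{7} for an arbitrary nonnegative nondecreasing convex function $h$ and an arbitrary factorization $f(t)g(t)=t$. Everything reduces to choosing $h$, $f$, and $g$ so that both sides of \eqref{7} collapse to the stated expressions; no new inequality is required.

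Concretely I would set $h(t)=t^{p}$ with $p\ge1$, together with $f(t)=t^{\alpha}$ and $g(t)=t^{1-\alpha}$ for $\alpha\in[0,1]$. First one checks the hypotheses: on $[0,\infty)$ the map $t\mapsto t^{p}$ is nonnegative, nondecreasing, and convex when $p\ge1$ (its first and second derivatives are nonnegative there), so Lemma \ref{convex}, which is invoked inside the proof of the preceding theorem, genuinely applies; moreover $f,g$ are nonnegative continuous functions on $[0,\infty)$ with $f(t)g(t)=t^{\alpha}t^{1-\alpha}=t$, so the standing factorization hypothesis is met (the endpoint cases $\alpha=0,1$ cause no difficulty, since $t^{0}\equiv1$ is continuous).

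With these choices the functional calculus gives $f^{2}(|C|)=|C|^{2\alpha}$ and $g^{2}(|C|)=|C|^{2(1-\alpha)}$, hence $h(f^{2}(|C|))=|C|^{2p\alpha}$ and $h(g^{2}(|C|))=|C|^{2p(1-\alpha)}$, and similarly for $B$, while on the left $h(\textbf{ber}(T))=\textbf{ber}^{p}(T)$. Substituting into \eqref{7} and reordering the two summands yields
\[
\textbf{ber}^{p}(T)\le\frac{1}{4}\big\|\,|B|^{2p\alpha}+|B|^{2p(1-\alpha)}\,\big\|+\frac{1}{4}\big\|\,|C|^{2p\alpha}+|C|^{2p(1-\alpha)}\,\big\|,
\]
which is the asserted bound. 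I do not anticipate any real obstacle here: the only points needing care are confirming the convexity and monotonicity of $t^{p}$ (so that Lemma \ref{convex} legitimately applies) and the operator identities $h(f^{2}(|B|))=|B|^{2p\alpha}$ and so on, both of which are routine.
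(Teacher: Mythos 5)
Your proof is correct and is essentially identical to the paper's: the paper's proof of this corollary consists precisely of substituting $h(t)=t^{p}$, $f(t)=t^{\alpha}$, $g(t)=t^{1-\alpha}$ into inequality \eqref{7}. Your extra checks (that $t^{p}$ is nonnegative, nondecreasing and convex for $p\geq 1$, that $f(t)g(t)=t$, and the functional-calculus identities such as $h(f^{2}(|B|))=|B|^{2p\alpha}$) merely make explicit what the paper leaves implicit.
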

\begin{proof}
By putting $h(t)=t^{p}$, $f(t)=t^{\alpha}$ and $g(t)=t^{1-\alpha}$ in inequality \eqref{7}, we get the desired inequality.
\end{proof}

%
\section{Berezin number and Cartesian decomposition}
In this section, our purpose is to give an upper bound for Berezin number in
terms of the Cartesian decomposition of operators on a RKHS $\mathcal{H=H(}%
\Omega\mathcal{)}$. Before giving the results, we need several well known lemmas.

\begin{lemma}\label{p1}\cite{mc}Let $A\in\mathcal{B}\left(  \mathcal{H}\right)  $ be a positive
operator. Then for $x\in\mathcal{H}$\newline$\left(
i\right)  $ $\left\langle A^{p}x,x\right\rangle \geq\left\Vert x\right\Vert
^{2\left(  1-p\right)  }\left\langle Ax,x\right\rangle ^{p}$, if $p\geq
1;$\newline$\left(  ii\right)  $ $\left\langle A^{p}x,x\right\rangle
\leq\left\Vert x\right\Vert ^{2\left(  1-p\right)  }\left\langle
Ax,x\right\rangle ^{p}$, if $0<p<1$.
\end{lemma}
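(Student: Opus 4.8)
The plan is to reduce the operator inequality to a scalar one by means of the spectral theorem, and then to invoke Jensen's inequality (equivalently, a power-mean or H\"older inequality). The first step is to dispose of homogeneity: both sides are homogeneous of degree $2$ in $x$, since $\langle A^{p}x,x\rangle$ scales like $\|x\|^{2}$ while $\|x\|^{2(1-p)}\langle Ax,x\rangle^{p}$ scales like $\|x\|^{2(1-p)}\cdot\|x\|^{2p}=\|x\|^{2}$. Hence, after replacing $x$ by $x/\|x\|$ (the case $x=0$ being trivial), it is enough to prove that for a unit vector $x$ one has $\langle A^{p}x,x\rangle\ge\langle Ax,x\rangle^{p}$ when $p\ge1$ and $\langle A^{p}x,x\rangle\le\langle Ax,x\rangle^{p}$ when $0<p<1$; the factor $\|x\|^{2(1-p)}$ is then restored by undoing the normalization.

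Since $A$ is positive, the spectral theorem gives $A=\int_{\sigma(A)}t\,dE(t)$ with $\sigma(A)\subseteq[0,\infty)$. For a fixed unit vector $x$, the map $S\mapsto\langle E(S)x,x\rangle$ is a Borel probability measure $\mu$ on $\sigma(A)$, and the functional calculus yields $\langle A^{p}x,x\rangle=\int t^{p}\,d\mu(t)$ and $\langle Ax,x\rangle=\int t\,d\mu(t)$. Applying Jensen's inequality to the function $\varphi(t)=t^{p}$ on $[0,\infty)$ (which is convex for $p\ge1$ and concave for $0<p<1$) gives $\varphi\!\left(\int t\,d\mu\right)\le\int\varphi(t)\,d\mu$ in the first case and the reverse inequality in the second, that is, exactly $\langle Ax,x\rangle^{p}\le\langle A^{p}x,x\rangle$ (respectively $\ge$). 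If one prefers to avoid measure theory, the same conclusion for $p\ge1$ follows from H\"older's inequality applied to $\int t\cdot 1\,d\mu$, and for integer $p$ it is merely iterated Cauchy--Schwarz.

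I do not expect any real obstacle here; the only points that deserve a word of care are that positivity of $A$ is essential (it is what places $\sigma(A)$ in $[0,\infty)$, where $t^{p}$ is defined and has the stated convexity/concavity), and that $\varphi(t)=t^{p}$ is continuous at $t=0$ for every $p>0$, so no pathology arises when $0\in\sigma(A)$. One may also record that the inequality is sharp: equality holds whenever $x$ is an eigenvector of $A$, equivalently when $\mu$ is a point mass.
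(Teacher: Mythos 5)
Your proof is correct. Note that the paper does not prove this lemma at all: it is quoted verbatim from McCarthy's paper (the reference \cite{mc}) as a known result, so there is no in-paper argument to compare against. Your route --- normalizing by homogeneity of degree $2$, passing to the spectral measure $\mu(S)=\langle E(S)x,x\rangle$ of the positive operator $A$ at a unit vector, and applying Jensen's inequality to the convex (resp.\ concave) function $t\mapsto t^{p}$ --- is precisely the standard proof of the McCarthy inequality, and every step you give is sound; the only cosmetic caveat is that for $p>1$ the expression $\|x\|^{2(1-p)}$ is not defined at $x=0$, so that case should be read as excluded rather than ``trivial''.
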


\begin{lemma}\label{p2}\cite{kit}Let $A\in\mathcal{B}\left(  \mathcal{H}\right)  $ and
$0\leq p\leq1$. Then for $x,y\in\mathcal{H}$%
\[
\left\vert \left\langle Ax,y\right\rangle \right\vert ^{2}\leq\left\langle
\left\vert A\right\vert ^{2p}x,x\right\rangle \left\langle \left\vert A^{\ast
}\right\vert ^{2\left(  1-p\right)  }y,y\right\rangle \text{.}%
\]
\end{lemma}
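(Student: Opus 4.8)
The statement is the generalized mixed Schwarz (Kato-type) inequality, and it is in fact already implicit in the excerpt: putting $B=I$ in inequality \eqref{al1} --- so that the constraint $|A|B=B^{*}|A|$ becomes $|A|=|A|$ and $r(B)=1$ --- together with the admissible pair $f(t)=t^{p}$, $g(t)=t^{1-p}$, which for $0\le p\le1$ are nonnegative and continuous on $[0,\infty)$ with $f(t)g(t)=t$, yields $|\langle Ax,y\rangle|\le\||A|^{p}x\|\,\||A^{*}|^{1-p}y\|$, and squaring both sides gives the asserted inequality. Nonetheless, I would prefer to record a self-contained proof via the polar decomposition.

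Write $A=U|A|$, where $U$ is the partial isometry with initial space $\overline{\operatorname{ran}|A|}=(\ker A)^{\perp}$; then $U^{*}U$ is the orthogonal projection onto that subspace, so $U^{*}U|A|=|A|$. The endpoint cases are plain Cauchy--Schwarz: for $p=1$ the claim reads $|\langle Ax,y\rangle|^{2}\le\|Ax\|^{2}\|y\|^{2}$, and for $p=0$ it reads $|\langle x,A^{*}y\rangle|^{2}\le\|x\|^{2}\|A^{*}y\|^{2}$. So assume $0<p<1$. Factoring $|A|=|A|^{1-p}|A|^{p}$ and using $(U|A|^{1-p})^{*}=|A|^{1-p}U^{*}$,
\[
\langle Ax,y\rangle=\langle U|A|^{1-p}|A|^{p}x,\;y\rangle=\langle |A|^{p}x,\;|A|^{1-p}U^{*}y\rangle ,
\]
so the Cauchy--Schwarz inequality gives
\[
|\langle Ax,y\rangle|^{2}\le\||A|^{p}x\|^{2}\,\||A|^{1-p}U^{*}y\|^{2}=\langle |A|^{2p}x,x\rangle\,\langle U|A|^{2(1-p)}U^{*}y,\;y\rangle .
\]

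The proof then finishes once we show $U|A|^{2(1-p)}U^{*}=|A^{*}|^{2(1-p)}$, which is the one delicate step. From $A^{*}=|A|U^{*}$ we get $AA^{*}=U|A|^{2}U^{*}=(U|A|U^{*})^{2}$, and since $U|A|U^{*}\ge0$ this gives $|A^{*}|=(AA^{*})^{1/2}=U|A|U^{*}$. Using $U^{*}U|A|=|A|$, induction yields $(U|A|U^{*})^{n}=U|A|^{n}U^{*}$ for every $n\ge1$, hence $U\,q(|A|)\,U^{*}=q(U|A|U^{*})=q(|A^{*}|)$ for every polynomial $q$ with $q(0)=0$; since $t\mapsto t^{2(1-p)}$ is continuous on $[0,\|A\|]$ and vanishes at $0$ (as $2(1-p)>0$), it is a uniform limit of such polynomials, and a standard norm-continuity argument then yields $U|A|^{2(1-p)}U^{*}=|A^{*}|^{2(1-p)}$. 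Substituting this back gives $|\langle Ax,y\rangle|^{2}\le\langle |A|^{2p}x,x\rangle\,\langle |A^{*}|^{2(1-p)}y,y\rangle$, as required. The essential obstacle is exactly this last identity: $U$ is merely a partial isometry, so $UU^{*}\ne I$ in general and one cannot pull $U,U^{*}$ blindly through the functional calculus --- it is the condition $h(0)=0$ for $h(t)=t^{2(1-p)}$ that makes $U\,h(|A|)\,U^{*}$ collapse exactly to $h(|A^{*}|)$, with no leftover term supported on $(\operatorname{ran}A)^{\perp}$.
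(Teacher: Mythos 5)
Your proposal is correct, but note that the paper does not prove this lemma at all: it is quoted from Kittaneh's paper \cite{kit} as a known result (the mixed Schwarz, or Kato-type, inequality), so any proof you give is necessarily "extra" relative to the text. Your first observation is the quickest way to stay inside the paper's own toolkit: taking $B=I$ in \eqref{al1} (the commutation hypothesis $|A|B=B^{*}|A|$ is vacuous, $r(I)=1$) with the admissible pair $f(t)=t^{p}$, $g(t)=t^{1-p}$ gives exactly $|\langle Ax,y\rangle|\le \||A|^{p}x\|\,\||A^{*}|^{1-p}y\|$, though this is slightly circular in spirit since \eqref{al1} is itself Kittaneh's generalization of the very inequality being proved. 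Your self-contained argument is the classical proof and is carried out correctly: the factorization $\langle Ax,y\rangle=\langle |A|^{p}x,|A|^{1-p}U^{*}y\rangle$, Cauchy--Schwarz, and then the identity $U\,h(|A|)\,U^{*}=h(|A^{*}|)$ for $h(t)=t^{2(1-p)}$. You rightly isolate the delicate point: since $U$ is only a partial isometry, one needs $U^{*}U|A|=|A|$, the identification $|A^{*}|=U|A|U^{*}$, and the restriction to continuous functions vanishing at $0$ (handled by polynomial approximation with zero constant term), with the endpoints $p=0,1$ treated separately by plain Cauchy--Schwarz because $t\mapsto t^{0}$ does not vanish at the origin. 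What your写up buys over the paper's bare citation is precisely this transparency about why the functional calculus passes through the partial isometry with no residual term on $(\operatorname{ran}A)^{\perp}$.
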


\begin{lemma}\label{p3}\cite{bohr}Let $x_{n}$ be a positive real number, $1\leq n\leq k$. Then for
each $p\geq1$%
\[
\left(
{\displaystyle\sum\limits_{n=1}^{k}}
x_{n}\right)  ^{p}\leq k^{p-1}%
{\displaystyle\sum\limits_{n=1}^{k}}
x_{n}^{p}.
\]
\end{lemma}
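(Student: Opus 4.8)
The plan is to read the asserted bound as the classical power-mean (Bohr-type) inequality and to obtain it in a single line from the convexity of $\phi(t)=t^{p}$ on $[0,\infty)$, which holds precisely because $p\geq 1$. First I would apply Jensen's inequality to $\phi$ with the uniform weights $1/k$ to get
\[
\left(\frac{1}{k}\sum_{n=1}^{k}x_{n}\right)^{p}=\phi\!\left(\frac{1}{k}\sum_{n=1}^{k}x_{n}\right)\leq\frac{1}{k}\sum_{n=1}^{k}\phi(x_{n})=\frac{1}{k}\sum_{n=1}^{k}x_{n}^{p},
\]
and then multiply through by $k^{p}$ to land exactly on $\bigl(\sum_{n=1}^{k}x_{n}\bigr)^{p}\leq k^{p-1}\sum_{n=1}^{k}x_{n}^{p}$.

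If one prefers to avoid invoking Jensen as a black box, an equivalent route is H\"{o}lder's inequality with the conjugate exponents $p$ and $q=p/(p-1)$: pairing the constant vector $(1,\dots,1)$ with $(x_{1},\dots,x_{k})$ gives
\[
\sum_{n=1}^{k}x_{n}\leq\Bigl(\sum_{n=1}^{k}1^{q}\Bigr)^{1/q}\Bigl(\sum_{n=1}^{k}x_{n}^{p}\Bigr)^{1/p}=k^{(p-1)/p}\Bigl(\sum_{n=1}^{k}x_{n}^{p}\Bigr)^{1/p},
\]
and raising both sides to the power $p$ yields the claim; the exponent bookkeeping is just $k^{p}\cdot k^{-1}=k^{p-1}$. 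A third, entirely self-contained option is induction on $k$ in the averaged form $\phi\bigl(\tfrac1k\sum x_{n}\bigr)\leq\tfrac1k\sum\phi(x_{n})$: split off the last term, apply two-point convexity, and invoke the inductive hypothesis on the first $k-1$ terms, the uniform weights recombining correctly at each step.

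I do not anticipate any genuine obstacle here. The two points that deserve a moment's care are the hypothesis $p\geq 1$, which is exactly what makes $t\mapsto t^{p}$ convex (the inequality reverses when $0<p<1$, and $p=1$ gives a trivial equality), and keeping track of the power of $k$ when clearing denominators or exponentiating. For completeness one may also record that, by strict convexity of $\phi$ for $p>1$, equality holds if and only if $x_{1}=\cdots=x_{k}$.
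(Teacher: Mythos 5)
Your proof is correct: the convexity of $t\mapsto t^{p}$ for $p\geq 1$ together with Jensen's inequality at uniform weights (or, equivalently, H\"{o}lder with exponents $p$ and $p/(p-1)$) gives exactly $\bigl(\sum_{n=1}^{k}x_{n}\bigr)^{p}\leq k^{p-1}\sum_{n=1}^{k}x_{n}^{p}$, and your exponent bookkeeping is right. The paper itself states this lemma only with a citation to Bohr and offers no proof, so there is nothing to compare against; your Jensen argument is the standard one and can stand as is.
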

Now, we are ready to give our results.
\begin{theorem}
Let $A_{n}\in\mathcal{B}\left(  \mathcal{H}\right)  $ have the Cartesian
decomposition $A_{n}=B_{n}+iC_{n}$ for $n=1,...,k$ and $p\geq1$. Then%
\[
\mathbf{ber}^{p}\left(
{\displaystyle\sum\limits_{n=1}^{k}}
A_{n}\right)  \leq\left(  \sqrt{2}k\right)  ^{p-1}\sup_{\lambda\in\Omega
}\left[
{\displaystyle\sum\limits_{n=1}^{k}}
\left(  \widetilde{\left\vert B_{n}\right\vert ^{2p}}\left(  \lambda\right)
+\widetilde{\left\vert C_{n}\right\vert ^{2p}}\left(  \lambda\right)  \right)
^{\frac{1}{2}}\right]
\]
for $\lambda\in\Omega$.
\end{theorem}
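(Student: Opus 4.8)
The plan is to fix $\lambda\in\Omega$, estimate the single scalar $\big\langle\big(\sum_{n=1}^{k}A_{n}\big)\hat{k}_{\lambda},\hat{k}_{\lambda}\big\rangle$ by a termwise argument, and take the supremum over $\lambda$ only at the very end. First I would use the triangle inequality together with Lemma \ref{p3} (applied with $k$ summands and exponent $p\geq1$):
\[
\Big|\Big\langle\sum_{n=1}^{k}A_{n}\hat{k}_{\lambda},\hat{k}_{\lambda}\Big\rangle\Big|^{p}\leq\Big(\sum_{n=1}^{k}\big|\langle A_{n}\hat{k}_{\lambda},\hat{k}_{\lambda}\rangle\big|\Big)^{p}\leq k^{p-1}\sum_{n=1}^{k}\big|\langle A_{n}\hat{k}_{\lambda},\hat{k}_{\lambda}\rangle\big|^{p},
\]
so it remains to bound each summand $|\langle A_{n}\hat{k}_{\lambda},\hat{k}_{\lambda}\rangle|^{p}$ by $(\sqrt{2})^{p-1}\big(\widetilde{|B_{n}|^{2p}}(\lambda)+\widetilde{|C_{n}|^{2p}}(\lambda)\big)^{1/2}$.

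For this termwise estimate---the heart of the proof---I would exploit the Cartesian decomposition: since $B_{n}$ and $C_{n}$ are self-adjoint, the numbers $\langle B_{n}\hat{k}_{\lambda},\hat{k}_{\lambda}\rangle$ and $\langle C_{n}\hat{k}_{\lambda},\hat{k}_{\lambda}\rangle$ are real, and $\langle A_{n}\hat{k}_{\lambda},\hat{k}_{\lambda}\rangle=\langle B_{n}\hat{k}_{\lambda},\hat{k}_{\lambda}\rangle+i\langle C_{n}\hat{k}_{\lambda},\hat{k}_{\lambda}\rangle$ gives
\[
|\langle A_{n}\hat{k}_{\lambda},\hat{k}_{\lambda}\rangle|^{2}=\langle B_{n}\hat{k}_{\lambda},\hat{k}_{\lambda}\rangle^{2}+\langle C_{n}\hat{k}_{\lambda},\hat{k}_{\lambda}\rangle^{2}.
\]
Raising to the power $p$ and applying Lemma \ref{p3} with two summands yields $|\langle A_{n}\hat{k}_{\lambda},\hat{k}_{\lambda}\rangle|^{2p}\leq2^{p-1}\big(\langle B_{n}\hat{k}_{\lambda},\hat{k}_{\lambda}\rangle^{2p}+\langle C_{n}\hat{k}_{\lambda},\hat{k}_{\lambda}\rangle^{2p}\big)$. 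Next, Cauchy--Schwarz gives $\langle B_{n}\hat{k}_{\lambda},\hat{k}_{\lambda}\rangle^{2}\leq\|B_{n}\hat{k}_{\lambda}\|^{2}=\langle|B_{n}|^{2}\hat{k}_{\lambda},\hat{k}_{\lambda}\rangle$, and then Lemma \ref{p1}$(i)$, applied to the positive operator $|B_{n}|^{2}$ with $\|\hat{k}_{\lambda}\|=1$ and exponent $p\geq1$, gives $\langle|B_{n}|^{2}\hat{k}_{\lambda},\hat{k}_{\lambda}\rangle^{p}\leq\langle|B_{n}|^{2p}\hat{k}_{\lambda},\hat{k}_{\lambda}\rangle=\widetilde{|B_{n}|^{2p}}(\lambda)$, and similarly for $C_{n}$. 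Hence $|\langle A_{n}\hat{k}_{\lambda},\hat{k}_{\lambda}\rangle|^{2p}\leq2^{p-1}\big(\widetilde{|B_{n}|^{2p}}(\lambda)+\widetilde{|C_{n}|^{2p}}(\lambda)\big)$, and a square root produces the desired termwise bound.

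Finally I would substitute this bound into the first display; the constants combine as $k^{p-1}\cdot2^{(p-1)/2}=(\sqrt{2}\,k)^{p-1}$, and taking $\sup_{\lambda\in\Omega}$ of both sides gives the claim, because $\mathbf{ber}^{p}\big(\sum_{n=1}^{k}A_{n}\big)=\sup_{\lambda\in\Omega}\big|\big\langle\big(\sum_{n=1}^{k}A_{n}\big)\hat{k}_{\lambda},\hat{k}_{\lambda}\big\rangle\big|^{p}$. The argument involves no real difficulty; the only places demanding care are verifying that both applications of the Bohr inequality in Lemma \ref{p3} carry an exponent $\geq1$ (in each case it is $p$, not $p/2$, since one splits only after raising $|\langle A_{n}\hat{k}_{\lambda},\hat{k}_{\lambda}\rangle|^{2}$ to the power $p$) and correctly tracking the factor $\sqrt{2}$ introduced by the final square root---this modest bookkeeping is the main obstacle.
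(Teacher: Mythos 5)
Your proof is correct and follows essentially the same route as the paper: the Cartesian splitting of $\langle A_n\hat{k}_\lambda,\hat{k}_\lambda\rangle$, two applications of the Bohr inequality (Lemma \ref{p3}) producing the factors $k^{p-1}$ and $(\sqrt{2})^{p-1}$, and Lemma \ref{p1}$(i)$ to pass from $\langle|B_n|^2\hat{k}_\lambda,\hat{k}_\lambda\rangle^{p}$ to $\widetilde{|B_n|^{2p}}(\lambda)$. The only cosmetic differences are that you invoke Cauchy--Schwarz directly where the paper cites Lemma \ref{p2} with exponent $1$, and you permute the order of the two Bohr steps, which changes nothing in substance.
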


\begin{proof}
Let $\widehat{k}_{\lambda}\in\mathcal{H}\left(  \Omega\right)  $. Then%
\[
\left\vert \left\langle
{\displaystyle\sum\limits_{n=1}^{k}}
A_{n}\widehat{k}_{\lambda},\widehat{k}_{\lambda}\right\rangle \right\vert
^{p}\leq\left(
{\displaystyle\sum\limits_{n=1}^{k}}
\left(  \left\langle B_{n}\widehat{k}_{\lambda},\widehat{k}_{\lambda
}\right\rangle ^{2}+\left\langle C_{n}\widehat{k}_{\lambda},\widehat
{k}_{\lambda}\right\rangle ^{2}\right)  ^{\frac{1}{2}}\right)  ^{p}%
\]
for $\lambda\in\Omega$. Applying Lemma \ref{p2} for $\alpha=1$, we get%
\[
\left\vert \left\langle
{\displaystyle\sum\limits_{n=1}^{k}}
A_{n}\widehat{k}_{\lambda},\widehat{k}_{\lambda}\right\rangle \right\vert
^{p}\leq\left(
{\displaystyle\sum\limits_{n=1}^{k}}
\left(  \left\langle \left\vert B_{n}\right\vert ^{2}\widehat{k}_{\lambda
},\widehat{k}_{\lambda}\right\rangle +\left\langle \left\vert C_{n}\right\vert
^{2}\widehat{k}_{\lambda},\widehat{k}_{\lambda}\right\rangle \right)
^{\frac{1}{2}}\right)  ^{p}%
\]
for $\lambda\in\Omega$. Using  Lemma \ref{p3} and Lemma \ref{p1}, \ we obtain%
\begin{align*}
\left\vert \left\langle
{\displaystyle\sum\limits_{n=1}^{k}}
A_{n}\widehat{k}_{\lambda},\widehat{k}_{\lambda}\right\rangle \right\vert
^{p}  & \leq k^{p-1}%
{\displaystyle\sum\limits_{n=1}^{k}}
\left(  \left\langle \left\vert B_{n}\right\vert ^{2}\widehat{k}_{\lambda
},\widehat{k}_{\lambda}\right\rangle +\left\langle \left\vert C_{n}\right\vert
^{2}\widehat{k}_{\lambda},\widehat{k}_{\lambda}\right\rangle \right)
^{\frac{p}{2}}\\
& \leq\left(  \sqrt{2}k\right)  ^{p-1}%
{\displaystyle\sum\limits_{n=1}^{k}}
\left(  \left\langle \left\vert B_{n}\right\vert ^{2}\widehat{k}_{\lambda
},\widehat{k}_{\lambda}\right\rangle ^{p}+\left\langle \left\vert
C_{n}\right\vert ^{2}\widehat{k}_{\lambda},\widehat{k}_{\lambda}\right\rangle
^{p}\right)  ^{\frac{1}{2}}\\
& \leq\left(  \sqrt{2}k\right)  ^{p-1}%
{\displaystyle\sum\limits_{n=1}^{k}}
\left(  \left\langle \left\vert B_{n}\right\vert ^{2p}\widehat{k}_{\lambda
},\widehat{k}_{\lambda}\right\rangle +\left\langle \left\vert C_{n}\right\vert
^{2p}\widehat{k}_{\lambda},\widehat{k}_{\lambda}\right\rangle \right)
^{\frac{1}{2}}%
\end{align*}
for $\lambda\in\Omega$. Taking supremum over $\lambda\in\Omega$, we have%
\[
\sup_{\lambda\in\Omega}\left\vert \left\langle
{\displaystyle\sum\limits_{n=1}^{k}}
A_{n}\widehat{k}_{\lambda},\widehat{k}_{\lambda}\right\rangle \right\vert
^{p}\leq\left(  \sqrt{2}k\right)  ^{p-1}\sup_{\lambda\in\Omega}\left[
{\displaystyle\sum\limits_{n=1}^{k}}
\left(  \left\langle \left\vert B_{n}\right\vert ^{2p}\widehat{k}_{\lambda
},\widehat{k}_{\lambda}\right\rangle +\left\langle \left\vert C_{n}\right\vert
^{2r}\widehat{k}_{\lambda},\widehat{k}_{\lambda}\right\rangle \right)
^{\frac{1}{2}}\right]
\]
and so%
\[
\mathbf{ber}^{p}\left(
{\displaystyle\sum\limits_{n=1}^{k}}
A_{n}\right)  \leq\left(  \sqrt{2}k\right)  ^{p-1}\sup_{\lambda\in\Omega
}\left[
{\displaystyle\sum\limits_{n=1}^{k}}
\left(  \widetilde{\left\vert B_{n}\right\vert ^{2p}}\left(  \lambda\right)
+\widetilde{\left\vert C_{n}\right\vert ^{2p}}\left(  \lambda\right)  \right)
^{\frac{1}{2}}\right]  .
\]

\end{proof}

\begin{theorem}
Let $A_{n}\in\mathcal{B}\left(  \mathcal{H}\right)  $ have the Cartesian
decomposition $A_{n}=B_{n}+iC_{n}$ for $n=1,...,k$ and $p\geq1$. Then%
\[
\mathbf{ber}^{p}\left(
{\displaystyle\sum\limits_{n=1}^{k}}
A_{n}\right)  \leq k^{p-1}2^{\frac{p}{2}-1}\sup_{\lambda\in\Omega}\left[
{\displaystyle\sum\limits_{n=1}^{k}}
\left(  \widetilde{\left\vert B_{n}+C_{n}\right\vert ^{2p}}\left(
\lambda\right)  +\widetilde{\left\vert B_{n}-C_{n}\right\vert ^{2p}}\left(
\lambda\right)  \right)  ^{\frac{1}{2}}\right]
\]
for $\lambda\in\Omega$.
\end{theorem}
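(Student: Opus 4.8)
The plan is to run the argument of the preceding theorem with the pair $(B_n,C_n)$ replaced, via the Cartesian decomposition, by the self-adjoint pair $(B_n+C_n,\,B_n-C_n)$, and then to collect the resulting powers of $2$. The only tools needed are Lemmas \ref{p1}, \ref{p2} and \ref{p3} (the last one used twice), exactly as in the previous proof.

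First I would fix $\widehat{k}_\lambda\in\mathcal{H}(\Omega)$. Since $B_n$ and $C_n$ are self-adjoint, $\langle B_n\widehat{k}_\lambda,\widehat{k}_\lambda\rangle$ and $\langle C_n\widehat{k}_\lambda,\widehat{k}_\lambda\rangle$ are real, so combining the elementary identity $s^{2}+t^{2}=\tfrac12\big((s+t)^{2}+(s-t)^{2}\big)$ with Lemma \ref{p2} (used with exponent $1$ and $x=y=\widehat{k}_\lambda$, which yields $|\langle S\widehat{k}_\lambda,\widehat{k}_\lambda\rangle|^{2}\le\langle|S|^{2}\widehat{k}_\lambda,\widehat{k}_\lambda\rangle$ for a self-adjoint $S$), one gets
\begin{align*}
\big|\langle A_n\widehat{k}_\lambda,\widehat{k}_\lambda\rangle\big|^{2}
&=\langle B_n\widehat{k}_\lambda,\widehat{k}_\lambda\rangle^{2}+\langle C_n\widehat{k}_\lambda,\widehat{k}_\lambda\rangle^{2}
=\tfrac12\Big(\langle(B_n+C_n)\widehat{k}_\lambda,\widehat{k}_\lambda\rangle^{2}+\langle(B_n-C_n)\widehat{k}_\lambda,\widehat{k}_\lambda\rangle^{2}\Big)\\
&\le\tfrac12\Big(\big\langle|B_n+C_n|^{2}\widehat{k}_\lambda,\widehat{k}_\lambda\big\rangle+\big\langle|B_n-C_n|^{2}\widehat{k}_\lambda,\widehat{k}_\lambda\big\rangle\Big).
\end{align*}

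Next I would apply the triangle inequality and then Lemma \ref{p3} (the Bohr inequality, exponent $p\ge1$, to the $k$ nonnegative numbers $\big(\langle|B_n+C_n|^{2}\widehat{k}_\lambda,\widehat{k}_\lambda\rangle+\langle|B_n-C_n|^{2}\widehat{k}_\lambda,\widehat{k}_\lambda\rangle\big)^{1/2}$), obtaining
\begin{align*}
\Big|\Big\langle\sum_{n=1}^{k}A_n\widehat{k}_\lambda,\widehat{k}_\lambda\Big\rangle\Big|^{p}
\le\frac{k^{p-1}}{2^{p/2}}\sum_{n=1}^{k}\big(\langle|B_n+C_n|^{2}\widehat{k}_\lambda,\widehat{k}_\lambda\rangle+\langle|B_n-C_n|^{2}\widehat{k}_\lambda,\widehat{k}_\lambda\rangle\big)^{p/2}.
\end{align*}
For each $n$ I would then use Lemma \ref{p3} once more, now with two terms, namely $(u+v)^{p/2}\le2^{(p-1)/2}(u^{p}+v^{p})^{1/2}$, followed by Lemma \ref{p1}$(i)$ applied to the positive operators $|B_n+C_n|^{2}$ and $|B_n-C_n|^{2}$ (recall $\|\widehat{k}_\lambda\|=1$), which gives $\langle|B_n\pm C_n|^{2}\widehat{k}_\lambda,\widehat{k}_\lambda\rangle^{p}\le\langle|B_n\pm C_n|^{2p}\widehat{k}_\lambda,\widehat{k}_\lambda\rangle=\widetilde{|B_n\pm C_n|^{2p}}(\lambda)$. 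This produces
\begin{align*}
\Big|\Big\langle\sum_{n=1}^{k}A_n\widehat{k}_\lambda,\widehat{k}_\lambda\Big\rangle\Big|^{p}
\le\frac{k^{p-1}}{2^{1/2}}\sum_{n=1}^{k}\Big(\widetilde{|B_n+C_n|^{2p}}(\lambda)+\widetilde{|B_n-C_n|^{2p}}(\lambda)\Big)^{1/2},
\end{align*}
and taking the supremum over $\lambda\in\Omega$, together with $2^{-1/2}\le2^{p/2-1}$ (valid precisely because $p\ge1$), gives the asserted inequality — in fact with the slightly sharper constant $k^{p-1}2^{-1/2}$.

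The step I expect to be the main obstacle is the bookkeeping of the two applications of Lemma \ref{p3}: because $p$ is only assumed $\ge1$, the estimate $(u+v)^{p/2}\le2^{p/2-1}(u^{p/2}+v^{p/2})$ is unavailable when $1\le p<2$, so one is forced to go through $(u+v)^{p}\le2^{p-1}(u^{p}+v^{p})$ instead, and one must track the resulting powers of $2$ carefully so as not to lose the target constant. Everything else is a direct transcription of the scheme already used in the previous theorem.
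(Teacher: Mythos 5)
Your proof is correct and follows essentially the same route as the paper: the identity $s^{2}+t^{2}=\tfrac12\big((s+t)^{2}+(s-t)^{2}\big)$, then Lemma \ref{p2} with exponent $1$, Lemma \ref{p3} twice, and Lemma \ref{p1}$(i)$, in the same order. Your bookkeeping is in fact slightly sharper, yielding the constant $k^{p-1}2^{-1/2}$, which you correctly weaken to the stated $k^{p-1}2^{p/2-1}$ using $p\geq1$.
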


\begin{proof}
Let $\widehat{k}_{\lambda}\in\mathcal{H}\left(  \Omega\right)  $. We have%
\begin{align*}
\left\vert \left\langle
{\displaystyle\sum\limits_{n=1}^{k}}
A_{n}\widehat{k}_{\lambda},\widehat{k}_{\lambda}\right\rangle \right\vert
^{p}  & \leq\left(
{\displaystyle\sum\limits_{n=1}^{k}}
\left(  \left\langle B_{n}\widehat{k}_{\lambda},\widehat{k}_{\lambda
}\right\rangle ^{2}+\left\langle C_{n}\widehat{k}_{\lambda},\widehat
{k}_{\lambda}\right\rangle ^{2}\right)  ^{\frac{1}{2}}\right)  ^{p}\\
& \leq\left(
{\displaystyle\sum\limits_{n=1}^{k}}
\left(  \frac{1}{2}\left(  \left\langle \left(  B_{n}+C_{n}\right)
\widehat{k}_{\lambda},\widehat{k}_{\lambda}\right\rangle ^{2}+\left\langle
\left(  B_{n}-C_{n}\right)  \widehat{k}_{\lambda},\widehat{k}_{\lambda
}\right\rangle ^{2}\right)  \right)  ^{\frac{1}{2}}\right)  ^{p}%
\end{align*}
for $\lambda\in\Omega$. Using Lemma \ref{p3} and Lemma \ref{p2}, respectively, we
have%
\begin{align*}
\left\vert \left\langle
{\displaystyle\sum\limits_{n=1}^{k}}
A_{n}\widehat{k}_{\lambda},\widehat{k}_{\lambda}\right\rangle \right\vert
^{p}  & \leq k^{p-1}2^{-\frac{p}{2}}%
{\displaystyle\sum\limits_{n=1}^{k}}
\left(  \left\langle \left(  B_{n}+C_{n}\right)  \widehat{k}_{\lambda
},\widehat{k}_{\lambda}\right\rangle ^{2}+\left\langle \left(  B_{n}%
-C_{n}\right)  \widehat{k}_{\lambda},\widehat{k}_{\lambda}\right\rangle
^{2}\right)  ^{\frac{p}{2}}\\
& \leq k^{p-1}2^{-\frac{p}{2}}%
{\displaystyle\sum\limits_{n=1}^{k}}
\left(  \left\langle \left\vert B_{n}+C_{n}\right\vert ^{2}\widehat
{k}_{\lambda},\widehat{k}_{\lambda}\right\rangle +\left\langle \left\vert
B_{n}-C_{n}\right\vert ^{2}\widehat{k}_{\lambda},\widehat{k}_{\lambda
}\right\rangle \right)  ^{\frac{p}{2}}%
\end{align*}
for $\lambda\in\Omega$. Then, applying Lemma \ref{p3} and Lemma \ref{p1}, we obtain%
\begin{align*}
\left\vert \left\langle
{\displaystyle\sum\limits_{n=1}^{k}}
A_{n}\widehat{k}_{\lambda},\widehat{k}_{\lambda}\right\rangle \right\vert
^{p}  & \leq k^{p-1}2^{\frac{p}{2}-1}%
{\displaystyle\sum\limits_{n=1}^{k}}
\left(  \left\langle \left\vert B_{n}+C_{n}\right\vert ^{2}\widehat
{k}_{\lambda},\widehat{k}_{\lambda}\right\rangle ^{p}+\left\langle \left\vert
B_{n}-C_{n}\right\vert ^{2}\widehat{k}_{\lambda},\widehat{k}_{\lambda
}\right\rangle ^{p}\right)  ^{\frac{1}{2}}\\
& \leq k^{p-1}2^{\frac{p}{2}-1}%
{\displaystyle\sum\limits_{n=1}^{k}}
\left(  \left\langle \left\vert B_{n}+C_{n}\right\vert ^{2p}\widehat
{k}_{\lambda},\widehat{k}_{\lambda}\right\rangle +\left\langle \left\vert
B_{n}-C_{n}\right\vert ^{2p}\widehat{k}_{\lambda},\widehat{k}_{\lambda
}\right\rangle \right)  ^{\frac{1}{2}}%
\end{align*}
for $\lambda\in\Omega$. Taking supremum over $\lambda\in\Omega$, we reach that%
\[
\mathbf{ber}^{p}\left(
{\displaystyle\sum\limits_{n=1}^{k}}
A_{n}\right)  \leq k^{p-1}2^{\frac{p}{2}-1}\sup_{\lambda\in\Omega}\left[
{\displaystyle\sum\limits_{n=1}^{k}}
\left(  \widetilde{\left\vert B_{n}+C_{n}\right\vert ^{2p}}\left(
\lambda\right)  +\widetilde{\left\vert B_{n}-C_{n}\right\vert ^{2p}}\left(
\lambda\right)  \right)  ^{\frac{1}{2}}\right]  .
\]
\end{proof}
\bigskip
\bibliographystyle{amsplain}

\end{document}